\theoremstyle{plain}
\newtheorem{theorem}{Theorem}
\newtheorem{lemma}[theorem]{Lemma}
\newtheorem{corollary}[theorem]{Corollary}
\newtheorem{proposition}[theorem]{Proposition}
\theoremstyle{definition}
\newcommand{\A}{\mathcal{A}}
\newcommand{\Ab}{{\bf A}}
\newcommand{\Xb}{{\bf X}}
\newcommand{\Yb}{{\bf Y}}
\newcommand{\Zb}{{\bf Z}}
\newcommand{\B}{\mathbb}
\newcommand{\C}{\mathcal}
\newcommand{\ga}{\alpha}
\newcommand{\gb}{\beta}
\newcommand{\gd}{\delta}
\newcommand{\eps}{\varepsilon}
\newcommand{\gl}{\lambda}
\newcommand{\gD}{\Delta}
\newcommand{\gS}{\Sigma}
\newcommand{\SG}{\langle \A \rangle}
\DeclareMathOperator{\spn}{span}
\DeclareMathOperator{\tr}{Tr}
\DeclareMathOperator{\GL}{GL}
\DeclareMathOperator{\E}{End}
\begin{document}

\title[Growth degree classification]{Growth degree classification for finitely generated semigroups of integer matrices}

\author{Jason P. Bell}
\address{Department of Pure Mathematics, University of Waterloo, Waterloo, Canada}
\email{jpbell@uwaterloo.ca}
\thanks{Research of J.~P.~Bell was supported by NSERC grant 31-611456, the research of M.~Coons was supported by ARC grant DE140100223, and the research of K.~G.~Hare was partially supported by NSERC}

\author{Michael Coons}
\address{School of Math.~and Phys.~Sciences\\
University of Newcastle\\
Callaghan\\
Australia}
\email{Michael.Coons@newcastle.edu.au}

\author{Kevin G. Hare}
\address{Department of Pure Mathematics, University of Waterloo, Waterloo, Canada}
\email{kghare@uwaterloo.ca}
\date{\today}

\keywords{finitely generated semigroups, matrix semigroups, automatic sequences, regular sequences}
\subjclass[2010]{Primary 15A16, 11B85; Secondary 15A18, 11N56}%

%%%%%%%%%%%%%%%%%%%%%%%%%%%%%%%%%%%%%%%%%%%%%%%%
\begin{abstract} 
Let $\A$ be a finite set of $d\times d$ matrices with integer entries and let $m_n(\A)$ be the maximum norm of a product of $n$ elements of $\A$. In this paper, we classify gaps in the growth of $m_n(\A)$; specifically, we prove that $\lim_{n\to\infty} \log m_n(\A)/\log  n\in\B{Z}_{\geqslant 0}\cup\{\infty\}.$ This has applications to the growth of regular sequences as defined by Allouche and Shallit.
\end{abstract}
%%%%%%%%%%%%%%%%%%%%%%%%%%%%%%%%%%%%%%%%%%%%%%%%

\maketitle

%%%%%%%%%%%%%%%%%%%%%%%%%%%%%%%%%%%%%%%%%%%%%%%%
\section{Introduction}
%%%%%%%%%%%%%%%%%%%%%%%%%%%%%%%%%%%%%%%%%%%%%%%%

Let $\A=\{ \Ab_1, \Ab_2, \dots, \Ab_m\}$ be a finite set of matrices 
    and let $\|\cdot\|$ be a matrix norm. 
Let $m_n(\A)$ denote the maximum norm of a product of $n$ elements of 
    $\A$; specifically, 
    $$m_n(\A):=\max_{1\leqslant i_1,\ldots,i_n\leqslant m}\|\Ab_{i_1}\dots\Ab_{i_n}\|.$$ 
Recall that the joint spectral radius of a $\A$ is 
    $\rho(\A):=\lim_{n\to\infty}|m_n(\A)|^{1/n}.$

The joint spectral radius was first introduced by Rota and Strang \cite{RS} in 1960; it arises naturally in a wide variety of areas.
We say that $\A$ satisfies the {\em finiteness property} if the limit $\rho(\A)$ is achieved
    by a finite sequence of matrices;
that is, there exists a finite sequence of matrices $A_{i_1}, \ldots, A_{i_k}\in\C{A}$ such that 
    $\rho(\A)=\lim_{n\to\infty}\|(A_{i_1} A_{i_2} \cdots A_{i_k})^n\|^{1/nk}.$
In 1995, Lagarias and Wang \cite{LW} conjectured that all $\A$ would satisfy the 
   finiteness property, though this is now known to be false.
Non-constructive counterexamples have been given by Bousch and Mairesse \cite{BM},
    Blondel, Theys and Valdimirov \cite{BTV} and Kozyakin \cite{Koz3}.
The first constructive counterexample was recently given by Hare, Morris, Sidorov and Theys \cite{HMST}.
For more details concerning the joint spectral radius see \cite{BN, Cicone, HMS, HS, Jungers, BJ, JPB2008, Theys}.

In 2005, Bell \cite{B2005} showed that {\em if $\A$ is a finite set of
    $d\times d$ complex matrices, then the growth of $m_n(\A)$ is either at 
    least exponential or it is bounded by a polynomial of degree $d-1$; moreover, 
    $m_n(\A)$ is bounded by a polynomial if and only if the joint spectral radius 
    of $\A$ is at most $1$.} 
This result exhibits a gap in the possible types of growth of $m_n(\A)$. 
For example, as Bell points out in his paper, it is impossible to find a finite set of 
    matrices $\A$ such that $m_n(\A)\sim e^{\sqrt{n}}.$ 

Bell's result immediately raises the question of lower gaps in the growth of $m_n(\A)$. 
In a recent paper \cite{BCH2014}, we provided such a gap result, showing that $m_n(\A)$ 
    is either bounded or grows at least linearly.

In this paper, we obtain a generalisation of our aforementioned result \cite{BCH2014} by 
    providing a complete classification of gaps in the growth of $m_n(\A)$ for integer 
    matrices. 
To state our result explicitly, we use the following definitions.

Let $\A$ be a finite non-degenerate set of matrices. Here, a set $\A$ is called non-degenerate if $m_n(\A) \not\to 0$.
That is, for all $N$ there exists an $n \geq N$ with $m_n(\A) \neq 0$.
We define the {\em growth degree of $\A$} as
\begin{equation}
{\rm GrDeg}(\A):=\lim_{n\to\infty} \frac{\log m_n(\A) }{\log  n}.
\label{eq:GrDeg}
\end{equation}

\noindent It is not immediately clear that the limit in \eqref{eq:GrDeg} is well defined, though this will be a consequence of our main theorem, which is the following classification.

\begin{theorem}\label{mainSG} 
If $\A$ is a finite non-degenerate set of $d\times d$ integer matrices, 
    then ${\rm GrDeg}(\A)\in\B{Z}_{\geqslant 0}\cup\{\infty\}.$ 
Moreover, $k:={\rm GrDeg}(\A)\in\B{Z}_{\geqslant 0}$ if and only if the 
    joint spectral radius of $\A$ equals one, and in this case, there are positive constants $C_1$ and $C_2$ such that
    $$C_1n^k \leqslant m_n(\A) \leqslant C_2 n^k$$ for all $n\geqslant 1$.
\end{theorem}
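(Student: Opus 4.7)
The plan is to split the argument on the joint spectral radius $\rho(\A)$. The extreme cases are quick: if $\rho(\A) > 1$, Bell's 2005 theorem yields at least exponential growth of $m_n(\A)$, so ${\rm GrDeg}(\A) = \infty$; if $\rho(\A) < 1$, then $m_n(\A)^{1/n} \to \rho(\A) < 1$ forces $m_n(\A) \to 0$, and since any nonzero integer matrix has norm bounded below by a positive constant depending only on the chosen norm, $m_n(\A)$ must equal $0$ for all sufficiently large $n$, contradicting non-degeneracy. So only $\rho(\A) = 1$ remains, and Bell's theorem already supplies the polynomial upper bound $m_n(\A) \leq C n^{d-1}$.

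The task is then to show $m_n(\A) \asymp n^k$ for some integer $k \in \{0, 1, \ldots, d-1\}$. The key algebraic input is Kronecker's theorem: every eigenvalue of every element of $\SG$ is an algebraic integer whose Galois conjugates all lie in the closed unit disc, hence is either a root of unity or zero. I would then induct on $d$, passing to a maximal flag $0 = V_0 \subsetneq V_1 \subsetneq \cdots \subsetneq V_r = \B{Q}^d$ of $\SG$-invariant $\B{Q}$-subspaces and working in a $\B{Z}$-basis adapted to the flag, so that every generator of $\A$ becomes integer block upper triangular. The action on each successive quotient $V_{i+1}/V_i$ has integer growth degree by the inductive hypothesis, and expanding a length-$n$ block-triangular product presents each off-diagonal block entry as a sum over increasing flag-transition sequences $1 \leq t_1 < \cdots < t_s \leq n$, each term bounded above by a polynomial in $n$ of integer degree (depending on how the inductive diagonal contributions combine). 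Matching lower bounds are obtained by exhibiting explicit words that realise the maximum viable transition count.

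The main obstacle is the base of the induction — the case where $\SG$ acts irreducibly on $\B{Q}^d$, in which the flag cannot be refined. Irreducibility does not imply boundedness: the integer matrix pair $\Ab_1 = \begin{pmatrix} 0 & 1 \\ -1 & 0 \end{pmatrix}$ and $\Ab_2 = \begin{pmatrix} 0 & 1 \\ -1 & 1 \end{pmatrix}$ acts irreducibly on $\B{Q}^2$, yet $\Ab_1 \Ab_2 = \begin{pmatrix} -1 & 1 \\ 0 & -1 \end{pmatrix}$ is unipotent up to sign and drives linear growth of $m_n$. Handling the irreducible case requires combining Bell's upper bound with the authors' earlier gap result \cite{BCH2014} (bounded or at least linear), together with a refined analysis using Kronecker to control eigenvalues of long products and a bootstrapping argument ruling out fractional intermediate growth. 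Securing integer growth in the irreducible case is the crux; once it is in hand, the flag expansion forces the overall growth exponent to be an integer and yields the uniform two-sided bounds $C_1 n^k \leq m_n(\A) \leq C_2 n^k$ claimed in the theorem.
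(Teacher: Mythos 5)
Your high-level outline is on the right track (reduction to $\rho(\A)=1$, Kronecker's theorem forcing all eigenvalues to be roots of unity or zero, induction on dimension via invariant subspaces), but the plan has a genuine gap at precisely the place you flag as ``the crux'', and your supporting example is defective.

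First, the example. The semigroup generated by your $\Ab_1,\Ab_2$ is not tame and does not have polynomial growth: the product $\Ab_1\Ab_2\Ab_1\Ab_2^2 = \left(\begin{smallmatrix} 2 & -1 \\ -1 & 1\end{smallmatrix}\right)$ has characteristic polynomial $\lambda^2-3\lambda+1$ with spectral radius $\tfrac{3+\sqrt 5}{2}>1$, so $\rho(\A)>1$ and $m_n(\A)$ grows exponentially. The element $\Ab_1\Ab_2$ being (sign-)unipotent is irrelevant to the overall growth once some other element has spectral radius exceeding one. So this pair does not witness a $\B{Q}$-irreducible tame system with unbounded polynomial growth.

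Second, and more fundamentally, the decision to induct on $\B{Q}$-invariant subspaces is what creates the obstruction you cannot get past. The paper works over $\B{C}$ (more generally over an algebraically closed field of characteristic zero), and there the putative irreducible base case disappears entirely. The key structural input is a Burnside-type theorem for tame semigroups (Lemma~\ref{LemA} in the paper, which relies on Lemma~\ref{LemKK}): if $\SG$ is tame and $\spn_K\SG=K^{d\times d}$, then $\SG$ is \emph{finite}. Combined with Schur's lemma and the Jacobson density theorem, this shows (Lemma~\ref{lem: GL}) that any \emph{infinite} tame semigroup over an algebraically closed field must have a proper invariant subspace, so one can always block-upper-triangularise and continue the induction. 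In other words, over $\B{C}$ ``irreducible tame'' forces ``finite'', and finite gives $m_n(\A)$ bounded. Your obstacle is an artefact of $\B{Q}$-irreducibility: a $\B{Q}$-irreducible system need not be $\B{C}$-irreducible (e.g. the algebra generated by a rotation is a proper division algebra $\cong\B{Q}(i)$, not $M_2(\B{Q})$), so the $\B{Q}$-flag can terminate prematurely.

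Finally, even granting the reduction, the inductive step in your sketch (``each off-diagonal block entry as a sum over increasing flag-transition sequences'') is not enough on its own to produce matching two-sided bounds with a common integer exponent. The paper needs the more delicate machinery of Lemma~\ref{nil} (producing a uniform $a$ and $k$ with $\prod_{i=1}^k(\Xb_i^{2a}-\Xb_i^a)\Yb_i=0$), the algebraic identity~\eqref{xnaxa}, the telescoping argument of Lemma~\ref{lem: lr} showing that $g(sn+\ell)$ is a polynomial, and the recursive upper-bound bound via the quotient module $V/V_0$ in Theorem~\ref{morder}. You acknowledge this part is incomplete, but it is worth being explicit that it is not merely a bookkeeping exercise: the identity~\eqref{xnaxa} and the notion of the minimal $k$ in Lemma~\ref{nil} are what ultimately pin down the integer growth degree.
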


We point out that we in fact prove a much stronger result (Theorem \ref{morder}) that deals with semigroups of complex matrices that have the property that every nonzero eigenvalue of each matrix is a root of unity.
It is worth observing here that, in the case of integer matrices, if $m_n(\A) \not\to 0$, then we necessarily
    have $m_n(\A) \geqslant 1$, and hence the joint spectral radius in question is bounded
    below by $1$.
Moreover, if $\rho(\A) > 1$, then it is easy to see that ${\rm GrDeg}(\A) = \infty$.
So the interesting case is when $\rho(\A) = 1$.  

In addition to the gaps provided by Bell \cite{B2005}, our result provides gaps of 
    smaller and intermediate orders; for example, it is impossible to find a finite 
    set of integer matrices $\A$ such that $m_n(\A)\asymp n^\alpha$ for any 
    $\alpha\in\B{R}\setminus\B{Z}.$ 
Indeed, our result implies that either $m_n(\A)$ is bounded or there is a 
    real constant $c>0$ such that $m_n(\A)>cn$ for all $n$ sufficiently large. Moreover, if there is a real number $\alpha>0$ such that $m_n(\A)>cn^\alpha$ for some positive constant $c$, then our result implies that there is a constant $C>0$ such that $m_n(\A)>Cn^{\lceil \alpha\rceil}$, where $\lceil x\rceil$ denotes the smallest integer greater than or equal to the real number $x$.

Theorem \ref{mainSG} addresses a question of Jungers, Protasov, and Blondel 
    \cite[Problem~2]{JPB2008}, who asked: 
    {\em is it true that for any finite set of matrices $\A$, 
    the limit $$\lim_{n\to\infty} \frac{\log [\rho(\A)^{-n}m_n(\A)]}{\log n}$$ exists 
    and is always an integer? 
In particular, does this hold for nonnegative integer matrices?} 

Theorem \ref{mainSG} shows that, in the special case where $\A$ is a finite set of integer matrices, and $\rho(\A)=1$, the answer to the above question is `yes'. Note that our result does not require the matrices to have nonnegative integer values. The case of matrices with strictly positive integer values was considered by Jungers, Protasov, and Blondel \cite{JPB2008}.

Our original motivation for Theorem \ref{mainSG} was to prove the analogous result in the context of regular sequences as defined by Allouche 
    and Shallit \cite{AS1992}. 

Let $\gS_m=\{1,\ldots,m\}$ be a finite alphabet, let $R$ be a commutative ring, let $M$ be a finitely generated $R$-module and let $f:\gS_m^*\to M$. 
Let $f^u(w) := f(uw)$.
We say that $f$ is {\em $(R,m)$-regular} if,
    $\mathrm{span}\left\{\{f^u(w)\}_{w \in \gS_m^*}: u \in \gS_m^*\right\}$, 
    the $R$-module spanned by the maps $f^u(w)$, is finitely generated.  %When the ring $R$ is understood and is the domain of the map $f$, we will simply write $\gS$-regular for $(R,\gs)$-regular.
    
Connecting $(R,m)$-regularity with more commonly regarded objects, 
    Allouche and Shallit \cite[Theorem~2.3]{AS1992} showed that in the case when $R=\mathbb{Z}$, 
    {\em the sequence of values of $f$ can be produced by a deterministic finite automaton 
    with output (that is, $f$ is $(\B{Z},m)$-automatic) if and only if 
    $f$ is $(\B{Z},m)$-regular and $\#\{f(\gS_m^*)\}$ is finite.} 
Moreover, connecting $(\B{Z},m)$-regularity to semigroups of matrices, they 
    showed \cite[Theorem 2.2]{AS1992} that {\em $f$ is $(\B{Z},m)$-regular if 
    and only if there exist positive integers $m$ and $d$, matrices 
    $\Ab_1,\ldots,\Ab_m\in\B{Z}^{d\times d}$, and vectors ${\bf v},{\bf w}\in\B{Z}^d$ 
    such that 
    $$f(w)={\bf w}^T \Ab_w {\bf v},$$ 
    where $\Ab_w:=\Ab_{i_1}\cdots\Ab_{i_s}$, when $w={i_1}\cdots {i_s}.$} 
In this way, there is a correspondence between finitely generated semigroups of 
    integer matrices and regular sequences. 

Using this correspondence, we are led to define the {\em growth degree} of a
    non-degenerate $f:\gS_m^*\to\B{C}$ as 
    $${\rm GrDeg}(f):=\limsup_{n\to\infty} \max_{\{w\in\gS^*:|w|=n\}}
        \frac{\log|f(w)|}{\log  n},$$ 
    where we have used $|\cdot|$ to denote both the length of a word and 
    the absolute value of a real number. 
Here $f$ is degenerate if $f(w) = 0$ for all $w$ sufficiently long.
We are taking $\log|0| = -\infty$, which is always less than any real number
    (and hence can only be attained by the $\limsup$ for a degenerate $f$, which
    we explicitly disallow).
In the context of $(\B{Z},m)$-regular sequences, we have the following result, which, in 
    view of the correspondence given by Allouche and Shallit, is a near-restatement 
    of Theorem \ref{mainSG}.

\begin{theorem}\label{main} 
   Let $f:\gS_m^*\to\B{Z}$ be $(\B{Z},m)$-regular and non-degenerate.  Then 
    ${\rm GrDeg}(f)\in\B{Z}_{\geqslant 0}\cup\{\infty\}.$ 
Moreover, ${\rm GrDeg}(f)\in\B{Z}_{\geqslant 0}$ if and only if $f\in\C{R}_0(\gS_m)$, where $\C{R}_0(\gS_m)$ is the algebra of sequences generated by automatic sequences with convolution as multiplication.
\end{theorem}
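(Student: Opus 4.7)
The plan is to reduce Theorem \ref{main} to Theorem \ref{mainSG} via the Allouche--Shallit correspondence, and then to match the resulting growth information against the convolution algebra $\C{R}_0(\gS_m)$ using the finer structural information of Theorem \ref{morder}.

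First, by \cite[Theorem 2.2]{AS1992}, write $f(w) = {\bf w}^T \Ab_w {\bf v}$ for some finite set $\A = \{\Ab_1,\ldots,\Ab_m\} \subset \B{Z}^{d\times d}$ and ${\bf v},{\bf w} \in \B{Z}^d$, and reduce to a \emph{minimal} representation: after restricting to the $\A$-invariant subspace $\spn_\B{Q}\{\Ab_w {\bf v}\}$ and its dual analogue, one may assume $\spn_\B{Q}\{\Ab_w {\bf v} : w \in \gS_m^*\} = \spn_\B{Q}\{\Ab_u^T {\bf w} : u \in \gS_m^*\} = \B{Q}^d$. Non-degeneracy of $f$ transfers to non-degeneracy of $\A$. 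Submultiplicativity gives $|f(w)| \leq \|{\bf w}\|\|\Ab_w\|\|{\bf v}\|$, hence ${\rm GrDeg}(f) \leq {\rm GrDeg}(\A)$. Conversely, minimality yields words $u_1,\ldots,u_r,v_1,\ldots,v_s$ such that $\{\Ab_{v_j}{\bf v}\}$ and $\{\Ab_{u_i}^T{\bf w}\}$ form $\B{Q}$-bases of $\B{Q}^d$; applying the change-of-basis inequality gives $\|\Ab_w\| \leq C\max_{i,j}|{\bf w}^T\Ab_{u_i}\Ab_w\Ab_{v_j}{\bf v}| = C\max_{i,j}|f(u_iwv_j)|$ for some constant $C$ depending only on $\A$. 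Since $|u_i|$ and $|v_j|$ are bounded, the shift is absorbed into the $\log n$ normalization, giving ${\rm GrDeg}(\A) \leq {\rm GrDeg}(f)$. Combining with Theorem \ref{mainSG}, one obtains ${\rm GrDeg}(f) \in \B{Z}_{\geqslant 0} \cup \{\infty\}$.

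For the characterization, the easy direction is that every $f \in \C{R}_0(\gS_m)$ has finite integer growth degree: automatic sequences are bounded (growth degree $0$), and the convolution $(g_1 \ast g_2)(w) = \sum_{w = w_1 w_2} g_1(w_1) g_2(w_2)$ of sequences of growth degrees $a$ and $b$ has growth degree at most $a+b+1$, since a length-$n$ word admits exactly $n+1$ decompositions; induction on the depth of any polynomial-in-convolution expression finishes. The hard direction — that finite integer growth degree forces $f \in \C{R}_0(\gS_m)$ — is the principal obstacle, and is where Theorem \ref{morder} is needed. Finite growth degree forces $\rho(\A) = 1$, so by Kronecker's theorem on algebraic integers of absolute value $\leq 1$, every nonzero eigenvalue of every $\Ab_i$ is a root of unity. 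Passing to a common power $N$ to collapse the roots of unity to $1$ and isolating the nilpotent parts, one can formally expand
$$\Ab_{i_1}^N\cdots\Ab_{i_n}^N = \sum_{S \subseteq [n]}\prod_{j \in S}\mathbf{N}_{i_j}$$
(up to a block decomposition), where only subsets $S$ with $|S| \leq d$ contribute. After evaluating against ${\bf v}$ and ${\bf w}^T$ and regrouping by the positions $j_1 < \cdots < j_k$ of $S$, the right-hand side becomes a finite sum of $(k+1)$-fold convolutions of automatic sequences: the automatic backbone records the periodic letter pattern modulo $N$, and the sum over position tuples implements the convolution. The principal technical difficulty is the bookkeeping needed to reconcile the period $N$ with the original alphabet $\gS_m$ and to verify that the resulting decomposition lands in $\C{R}_0(\gS_m)$ rather than a strictly larger subalgebra.
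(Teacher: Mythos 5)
Your reduction of the statement ${\rm GrDeg}(f)\in\B{Z}_{\geqslant 0}\cup\{\infty\}$ to Theorem~\ref{mainSG} via a minimal representation is sound and essentially matches the paper: passing to a canonical representation with $\spn{\bf w}^T\SG = \B{C}^{1\times d}$ and $\spn\SG{\bf v}=\B{C}^{d\times 1}$, and sandwiching $\max_{|w|=n}|f(w)|$ between constants times $m_n(\A)$ (up to a bounded shift in word length), does give ${\rm GrDeg}(f)={\rm GrDeg}(\A)$. The easy direction of the characterization — $f\in\C{R}_0(\gS_m)$ implies polynomial growth by bounding the degree of a single convolution by $a+b+1$ and inducting — is correct, though different from the paper, which routes through the identity $\C{R}_0(\gS_m)=\C{S}_0(\gS_m)$ and then invokes the gap theorem of \cite{B2005} for tame semigroups.

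The hard direction is where there is a genuine gap. Your claimed expansion
$\Ab_{i_1}^N\cdots\Ab_{i_n}^N = \sum_{S\subseteq[n]}\prod_{j\in S}\mathbf{N}_{i_j}$
requires $\Ab_i^N = {\bf I} + \mathbf{N}_i$ with $\mathbf{N}_i$ nilpotent, i.e.\ $\Ab_i^N$ unipotent, which fails whenever some $\Ab_i$ has $0$ as an eigenvalue (the case $\lambda=0$ is explicitly allowed by tameness). The correct Jordan--Chevalley expansion $\Ab_i^N=P_i+\mathbf{M}_i$ (idempotent plus nilpotent) produces a sum $\sum_{S\subseteq[n]} X_{i_1}\cdots X_{i_n}$ in which the factors $X_{i_j}$ for $j\notin S$ are the idempotents $P_{i_j}$, and these do not commute across different generators, do not drop out, and do not obviously generate a finite semigroup; dropping them is not "up to a block decomposition." Moreover, the expression only handles words of the very special form $i_1^N\cdots i_n^N$, and the hand-wave "the automatic backbone records the periodic letter pattern modulo $N$" does not explain how an arbitrary word in $\gS_m^*$ is processed or why the resulting function lies in $\C{R}_0(\gS_m)$. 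What is actually needed here — and is the content of Proposition~\ref{Thm1} in the paper — is a dimensional induction: use the Burnside-type lemma for tame semigroups (Lemma~\ref{LemA}) to produce an invariant subspace, block-upper-triangularize (Lemma~\ref{lem: GL}), and write $f(w)$ as a sum of lower-dimensional pieces plus a telescoping cross term that is exactly a finite sum of convolutions $\sum_i(f_i\star g_i)$; the automatic indicator functions $\phi_j$ then enter to absorb the boundary letter. Your all-at-once expansion bypasses this inductive structure without a substitute, so as written it does not establish that polynomial growth forces $f\in\C{R}_0(\gS_m)$.

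One smaller imprecision: you deduce from $\rho(\A)=1$ via Kronecker that "every nonzero eigenvalue of every $\Ab_i$ is a root of unity," but what the subsequent argument needs is that every nonzero eigenvalue of every \emph{product} (i.e.\ every element of $\SG$) is a root of unity. The same Kronecker argument does give this for integer matrices, since every product is again an integer matrix with spectral radius $\leqslant \rho(\A)=1$, but you should state it for the whole semigroup, not just the generators.
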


Note that $\C{R}_0(\gS_m)$ is a subset of the $(\B{C},m)$-regular sequences that plays the analogous role of the semigroups of matrices with $\rho(\A) = 1$.

In addition to the graded classification of growth types provided, Theorem \ref{main} can be seen as a lower bound version of a result of Allouche and Shallit \cite[Theorem~2.10]{AS1992}, which states that if $f$ is a $(\B{C},m)$-regular 
    sequence, then $f(w)=O(e^{c|w|})$ for some positive real number $c$.

As a final remark in this introduction, we relate our second result to the following canonical example. Let $\gS_1=\{1\}$. Then $f:\gS_1^*\to\B{Z}$ is $(\B{Z},m)$-regular if and only if $f$ satisfies a linear recurrence. The ring $\C{R}_0(\gS_1)$ is all sequences whose generating power series are rational functions in $\B{C}[[x]]$ with (possible) poles at zero and roots of unity, and moreover, this ring  is generated as a $\B{C}$-algebra by the eventually periodic sequences under the convolution product. Theorem \ref{main} generalises this well-known result about the one-variable case to the multivariable case, regarded in the sense of non-commutative rational functions of Berstel and Reutenauer \cite{BR2008}.

Our paper is organised as follows. In Section \ref{sec:class}, we prove Theorem \ref{mainSG}. We apply this to the case of $(\B{Z},m)$-regular sequences by proving Theorem \ref{main} in Section \ref{sec:app}.

%%%%%%%%%%%%%%%%%%%%%%%%%%%%%%%%%%%%%%%%%%%%%%%%
\section{Classification of finite growth degrees}
%%%%%%%%%%%%%%%%%%%%%%%%%%%%%%%%%%%%%%%%%%%%%%%%
\label{sec:class}

For a finite set of matrices $\A=\{\Ab_1,\Ab_2,\ldots,\Ab_m\}$, 
    we write $\SG$ for the semigroup generated by these matrices under 
    matrix multiplication.   
Here, our semigroups include an identity element, arising from the empty product
    of elements in $\A$.
Let $\C{U}$ be the set of all roots of unity.
We say that a matrix is {\em tame} if all eigenvalues of the matrix lie in 
    $\C{U} \cup \{0\}$.
We say that $\SG$ is a {\em tame semigroup} if all matrices in $\SG$ are tame.

To prove Theorem \ref{mainSG}, we require the following lemmas.
\begin{lemma}\label{LemKK} Let $K$ be a finitely generated extension of $\mathbb{Q}$ and let $d$ be a positive integer.  Then the collection $\mathcal{Y}$ of roots of unity $\omega$ such that $\omega$ is the root of a nonzero degree $d$ polynomial with coefficients in $K$ is finite.
\end{lemma}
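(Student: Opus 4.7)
The plan is to reduce the question from the finitely generated field $K$ to a number field, specifically the algebraic closure of $\mathbb{Q}$ inside $K$, and then use the fact that $\phi(n)\to\infty$ to force the order of $\omega$ to be bounded.

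First I would introduce $L := K \cap \overline{\mathbb{Q}}$, the relative algebraic closure of $\mathbb{Q}$ in $K$. A preliminary step is to show that $L$ is a number field. Since $K$ is finitely generated over $\mathbb{Q}$, there is a transcendence basis $y_1,\dots,y_t\in K$ such that $K$ is a finite algebraic extension of $\mathbb{Q}(y_1,\dots,y_t)$. The elements $y_1,\dots,y_t$ remain algebraically independent over $L$ (by additivity of transcendence degree), so
$$[L:\mathbb{Q}] = [L(y_1,\dots,y_t):\mathbb{Q}(y_1,\dots,y_t)] \leqslant [K:\mathbb{Q}(y_1,\dots,y_t)] < \infty.$$

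The key algebraic step is to show that for any $\omega\in\overline{\mathbb{Q}}$, the minimal polynomial of $\omega$ over $L$ remains irreducible over $K$, i.e.\ $[K(\omega):K] = [L(\omega):L]$. If $f\in L[x]$ is the minimal polynomial of $\omega$ over $L$ and it factored as $f=gh$ in $K[x]$ with $g,h$ monic nonconstant, then the roots of $g$ and $h$ lie in $\overline{\mathbb{Q}}$, so the coefficients of $g,h$ (which are symmetric functions of these roots) lie in $\overline{\mathbb{Q}} \cap K = L$, contradicting irreducibility over $L$.

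Now I would apply this with $\omega$ a primitive $n$-th root of unity. The tower $\mathbb{Q}\subseteq L\subseteq L(\omega)$ gives
$$[L(\omega):L] \geqslant \frac{[\mathbb{Q}(\omega):\mathbb{Q}]}{[L:\mathbb{Q}]} = \frac{\varphi(n)}{[L:\mathbb{Q}]}.$$
So if $\omega\in\mathcal{Y}$, then $[K(\omega):K]\leqslant d$ forces $\varphi(n) \leqslant d\cdot[L:\mathbb{Q}]$. Since $\varphi(n)\to\infty$, only finitely many orders $n$ are admissible, and each order contributes only $\varphi(n)$ primitive roots, so $\mathcal{Y}$ is finite.

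The main obstacle is the irreducibility step (that $L$ is algebraically closed in $K$ implies minimal polynomials transfer unchanged from $L$ to $K$); everything else is bookkeeping with the tower law and the classical growth of Euler's totient. Nothing in the argument uses that $\omega$ is specifically a root of unity beyond $\varphi(n)\to\infty$, so the result is essentially a statement that roots of unity of bounded degree over a finitely generated field are governed by a number-field invariant of that field.
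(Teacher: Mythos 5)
Your proof is correct, and it takes a genuinely different route from the paper's. The paper applies the primitive element theorem to write $K=F(\alpha)$ with $F=\mathbb{Q}(t_1,\ldots,t_s)$ a purely transcendental field, deduces $[F(\omega):F]\leqslant md$ with $m=[K:F]$, and then \emph{specialises} the $t_i$ to integers (using Zariski density of $\mathbb{Z}^s$ in $\mathbb{C}^s$ to find a specialisation at which the degree-$\leqslant md$ polynomial does not vanish identically) to conclude $[\mathbb{Q}(\omega):\mathbb{Q}]\leqslant md$. You instead isolate the number field $L=K\cap\overline{\mathbb{Q}}$, prove $[L:\mathbb{Q}]<\infty$ via the tower law and algebraic independence over $L$, and observe that since all roots of the minimal polynomial of $\omega$ over $L$ lie in $\overline{\mathbb{Q}}$, any monic factor over $K$ has coefficients in $\overline{\mathbb{Q}}\cap K=L$; so the $L$-minimal polynomial stays irreducible over $K$, giving $[K(\omega):K]=[L(\omega):L]$ and hence $\varphi(n)\leqslant d\,[L:\mathbb{Q}]$. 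Both arguments reduce to the same number-theoretic fact (finitely many roots of unity of bounded degree), but yours replaces the specialisation step with a purely field-theoretic linear-disjointness argument, which is cleaner and even yields the slightly sharper bound $[\mathbb{Q}(\omega):\mathbb{Q}]\leqslant d\,[L:\mathbb{Q}]$ (note $[L:\mathbb{Q}]\leqslant [K:F]$). One small point worth spelling out: the equality $[L:\mathbb{Q}]=[L(y_1,\ldots,y_t):\mathbb{Q}(y_1,\ldots,y_t)]$ you invoke is itself a linear-disjointness statement, but it follows from exactly the same ``coefficients-of-a-factor-are-algebraic'' argument you use in the key step, so there is no gap — just a remark that the two appeals are to the same principle.
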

\begin{proof}
By the primitive element theorem, we can write $K=\mathbb{Q}(t_1,\ldots ,t_s)(\alpha)$, where $t_1,\ldots ,t_s$ are algebraically independent over $\mathbb{Q}$ and $\alpha$ is algebraic over 
$\mathbb{Q}(t_1,\ldots ,t_s)$.  Let $F=\mathbb{Q}(t_1,\ldots ,t_s)$.  Then there is some natural number $m$ such that $[K:F]=m<\infty$.  By assumption, if $\omega\in \mathcal{Y}$ then $[K(\omega):K]\leqslant d$ and so $$[F(\omega):F]\leqslant [K(\omega):K][K:F] \leqslant md.$$  It follows that for $\omega\in \mathcal{Y}$ there exists a nonzero polynomial
$$F(x):=\sum_{i=0}^{md} p_i(t_1,\ldots ,t_s) x^i$$ with each $p_i\in \mathbb{Q}[t_1,\ldots ,t_s]$ such that 
$F(\omega)=0$.  Since $\mathbb{Z}^s$ is Zariski dense in $\mathbb{C}^s,$ there exist an $s$-tuple $(a_1,\ldots ,a_s)\in \mathbb{Z}^s$ and some $i$ such that $p_i(a_1,\ldots ,a_s)\neq 0$.  We note that $t_1,\ldots ,t_s$ are algebraically independent over $\mathbb{Q}(\omega)$ since they are algebraically independent over $\mathbb{Q}$, so we can specialise to obtain that
$\sum_{i=0}^{md} p_i(a_1,\ldots ,a_s) \omega^i = 0$.  In particular, we see that $[\mathbb{Q}(\omega):\mathbb{Q}]\leqslant md$ and so $\mathcal{Y}$ is finite since there are only finitely many roots of unity with this property.
\end{proof}
We next need a version of Burnside's Theorem for tame semigroups.
\begin{lemma}\label{LemA} Let $K$ be a field of characteristic zero, let $d$ be a positive integer, and let $\A:=\{\Ab_1,\Ab_2,\ldots,\Ab_m\}$ be a set of $d\times d$ matrices with entries in $K$. Suppose $\spn_{K}\SG=K^{d\times d}$ and $\SG$ is a tame semigroup. Then $\#\SG<\infty.$
\end{lemma}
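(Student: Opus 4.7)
The plan is to combine Lemma \ref{LemKK} with the classical Burnside trace pairing. The first step is a reduction to the finitely generated case: let $K_0 \subseteq K$ be the subfield generated over $\mathbb{Q}$ by the (finitely many) entries of $\Ab_1,\ldots,\Ab_m$. Then $K_0$ is finitely generated over $\mathbb{Q}$, every element of $\SG$ has entries in $K_0$, and because $\spn_K \SG = K^{d\times d}$ has $K$-dimension $d^2$, any $K$-basis of $K^{d\times d}$ drawn from $\SG$ is automatically a $K_0$-basis of $K_0^{d\times d}$; in particular $\spn_{K_0} \SG = K_0^{d\times d}$.

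Next I would show that the trace set $T := \{\tr(M) : M \in \SG\}$ is finite. Every $M \in \SG$ is tame, so its eigenvalues lie in $\C{U} \cup \{0\}$, and each nonzero eigenvalue is a root of the characteristic polynomial of $M$, which is a monic polynomial of degree $d$ with coefficients in $K_0$. Lemma \ref{LemKK}, applied with the field $K_0$ and the integer $d$, then yields a finite set $\C{Y}$ of roots of unity that contains every nonzero eigenvalue of every $M \in \SG$. Consequently $\tr(M)$ is a sum of $d$ elements of the finite set $\C{Y} \cup \{0\}$, so $T$ is finite.

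Finally I would execute the Burnside pairing. Pick $B_1,\ldots,B_{d^2} \in \SG$ that form a $K$-basis of $K^{d\times d}$, available by the spanning hypothesis. The $K$-linear map $\Phi : K^{d\times d} \to K^{d^2}$ given by $\Phi(M) := (\tr(MB_1),\ldots,\tr(MB_{d^2}))$ is injective, because the bilinear form $(A,B) \mapsto \tr(AB)$ is nondegenerate on $K^{d\times d}$ and the $B_i$ span. For any $M \in \SG$, each product $MB_i$ lies in $\SG$ and is therefore tame, so each coordinate $\tr(MB_i)$ lies in the finite set $T$. Hence $\Phi(\SG) \subseteq T^{d^2}$ is finite, and injectivity of $\Phi$ forces $\#\SG < \infty$.

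The main obstacle is the finiteness of $T$. Without the degree bound on characteristic polynomials and Lemma \ref{LemKK}, there would be nothing preventing eigenvalues of longer and longer products from ranging over ever-larger cyclotomic extensions of $K_0$, which would prevent us from bounding the trace set. Once $T$ is known to be finite, the trace-pairing step is the standard argument that an irreducible (here: spanning) matrix semigroup with a finite set of traces is itself finite.
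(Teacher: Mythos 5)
Your proof is correct and follows essentially the same route as the paper: bound the trace set via Lemma \ref{LemKK} applied to the characteristic polynomials, then inject $\SG$ into a finite power of that trace set via the nondegenerate pairing $(M,B)\mapsto\tr(MB)$ against a spanning set drawn from $\SG$. Your preliminary reduction to the finitely generated subfield $K_0$ is a worthwhile extra step, since Lemma \ref{LemKK} is stated only for finitely generated extensions of $\mathbb{Q}$ while the present lemma allows an arbitrary field of characteristic zero; the paper applies Lemma \ref{LemKK} to $K$ directly without making this reduction explicit.
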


\begin{proof}  
Pick $\Xb_1,\ldots,\Xb_{d^2}\in\SG$ such that $$\sum_{i=1}^{d^2}K\Xb_i=K^{d\times d}.$$ Since every matrix in $\SG$ has all eigenvalues in $\C{U}\cup\{0\}$, if $\Yb\in \SG,$ then we have that each of the eigenvalues of $\Yb$ is either zero or is a root of unity.
Furthermore each of these eigenvalues has the property that it is the root of a nonzero degree $d$ polynomial (the characteristic polynomial of $\Yb$) with coefficients in $K$.  Now let $\mathcal{Y}$ denote the set of elements $\omega$ of $\C{U}$ with the property that $\omega$ is a root of a nonzero degree $d$ polynomial with coefficients in $K$.  By Lemma \ref{LemKK}, $\mathcal{Y}$ is finite.  It follows that $$\mathcal{Y}_d:= \{ \omega_1+\cdots  + \omega_e \colon e\leqslant d\ \mbox{and}\ \omega_1,\ldots ,\omega_e\in \mathcal{Y}\}$$ is finite.

By construction, $\tr(\Yb)\in \mathcal{Y}_d$ for every $\Yb\in \SG$.   Let $\phi:\SG\to\mathcal{Y}_d^{d^2}$ be given by $$\phi(\Yb)=\left\{\tr({\bf YX}_i)\right\}_{i=1}^{d^2}.$$ We claim that $\phi$ is injective. To see this, suppose $\Yb,\Zb\in \SG$ and $\phi(\Zb)=\phi(\Yb)$, that is, $\tr((\Yb-\Zb)\Xb_i)=0$ for all $i=1,\ldots,d^2$. Since the $\Xb_i$ span $K^{d\times d}$, we have $\tr((\Yb-\Zb){\bf U})=0$ for every matrix ${\bf U}\in K^{d\times d}$ and this gives $\Yb-\Zb=0,$ so that $\Yb=\Zb$ and $\phi$ is injective.

Since $\phi$ injects $\SG$ into the finite set $\mathcal{Y}_d^{d^2}$, the lemma is proved.
\end{proof}

\begin{lemma}
Let $K$ be an algebraically closed field of characteristic zero, let $d$ be a positive integer, and let $\A:=\{\Ab_1,\Ab_2,\ldots,\Ab_m\}$ be a set of $d\times d$ matrices with entries in $K$ that generate an infinite tame semigroup.  Then there exists a matrix ${\bf U}\in {\rm GL}_d(K)$ and $e\in \{1,2,\ldots ,d-1\}$ such that for $i=1,\ldots ,m$, we have
$${\bf U}^{-1}\Ab_i{\bf U}=\left[\begin{matrix} {\bf B}_i & {\bf D}_i\\ {\bf 0}_{(d-e)\times e} & {\bf C}_i\end{matrix}\right],$$ where ${\bf B}_i\in K^{e\times e},$ ${\bf D}_i\in K^{e\times d},$ ${\bf C}_i\in K^{(d-e)\times (d-e)},$ and ${\bf 0}_{(d-e)\times e}$ is the $(d-e)\times e$ zero matrix.
\label{lem: GL}
\end{lemma}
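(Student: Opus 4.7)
The plan is to deduce this from Lemma \ref{LemA} together with the classical Burnside theorem. The guiding observation is that the failure of simultaneous block triangularisation means precisely that the semigroup $\SG$ acts irreducibly on $K^d$; if that were the case, $\SG$ would be forced to span all of $K^{d\times d}$ over $K$, and Lemma \ref{LemA} would then force $\SG$ to be finite, contradicting the hypothesis.

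To carry this out, I would first record that the $K$-linear span of $\SG$ is in fact a $K$-subalgebra of $K^{d\times d}$ containing the identity: this is immediate because $\SG$ contains the identity and is closed under matrix multiplication, so the product of any two elements of $\spn_K \SG$ is again a $K$-linear combination of elements of $\SG$. Call this subalgebra $\mathcal{B}\subseteq K^{d\times d}$.

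Next I would apply Burnside's theorem in its standard form: over an algebraically closed field $K$, any proper unital subalgebra $\mathcal{B} \subsetneq K^{d\times d}$ fails to act irreducibly on $K^d$, i.e.\ there is a $K$-subspace $V \subset K^d$ with $0 < \dim_K V < d$ that is stabilised by every element of $\mathcal{B}$. By Lemma \ref{LemA}, the hypothesis $\spn_K\SG = K^{d\times d}$ would force $\#\SG<\infty$, contradicting our assumption that the semigroup is infinite. Hence $\mathcal{B}$ is a proper subalgebra and such a $V$ exists; set $e=\dim_K V\in\{1,\ldots,d-1\}$.

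Finally, choose a basis $v_1,\ldots,v_e$ of $V$ and extend to a basis $v_1,\ldots,v_d$ of $K^d$, and let $\Ub\in\GL_d(K)$ be the matrix whose columns are $v_1,\ldots,v_d$. Since each $\Ab_i$ preserves $V$, the first $e$ columns of $\Ub^{-1}\Ab_i\Ub$ have zero entries in rows $e+1,\ldots,d$; this is exactly the block decomposition
$$\Ub^{-1}\Ab_i\Ub=\left[\begin{matrix} \Bb_i & \Db_i\\ \Ob_{(d-e)\times e} & \Cb_i\end{matrix}\right]$$
asserted in the statement, with $\Bb_i$ being the matrix of $\Ab_i|_V$ in the chosen basis and $\Cb_i$ the matrix of the induced action on $K^d/V$. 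There is no serious obstacle here; the content of the lemma is really the combination of Burnside and Lemma \ref{LemA}, and the only care needed is in verifying that $\spn_K\SG$ is a unital subalgebra so that Burnside applies.
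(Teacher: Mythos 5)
Your proof is correct and takes essentially the same approach as the paper. The paper's argument in the simple‐module case (Schur's lemma to get $\mathrm{End}_{\mathcal{S}}(V)=K$, then the Jacobson Density Theorem to conclude $\spn_K\SG=K^{d\times d}$) is precisely a proof of Burnside's theorem over an algebraically closed field, which you instead invoke as a black box; both routes then combine with Lemma~\ref{LemA} to rule out irreducibility and produce the invariant subspace. Your version is actually a bit cleaner, since the paper frames its proof as an induction on $d$ but never uses the inductive hypothesis, whereas you correctly observe that citing Burnside makes the argument direct.
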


\begin{proof} We prove this lemma by induction on $d$.  If $d=1$, then there do not exist infinite finitely generated tame semigroups of $d\times d$ matrices.  Thus we may assume that $d>1$.   Now assume that the conclusion holds for all dimensions less than $d$.  Let $\C{S}$ denote the $K$-span of $\SG$, and set $V=K^d$. 

If $V$ is a simple left $\C{S}$-module, then $\gD:=\E_\C{S}(V)$ is a division ring by Schur's lemma \cite[Page 356, Excerise 11]{DF2}.  Since any $\C{S}$-linear map from $V$ to $V$ is necessarily $K$-linear, we see that $\gD$ embeds in ${\rm End}_{K}(V)$ and so $\gD$ is finite-dimensional as a $K$-vector space and hence $\gD=K$ since $K$ is algebraically closed and hence has trivial Brauer group.  Then by the Jacobson Density Theorem \cite{JDT}, $\C{S}$ embeds as a dense subring of $\E_K(V)$. Since $\dim_{K}V<\infty$, we see that $\C{S}=\E_{K}(V)=K^{d\times d}.$   Thus by Lemma \ref{LemA}, we have that $\SG$ is finite, which contradicts our assumption. Thus we may assume that $V$ is not simple.

Since $V$ is not simple, there is an $\C{S}$-submodule $W$ with $0\subsetneq W\subsetneq V$. 

Let ${\bf u}_1,\ldots,{\bf u}_{d}$ be a $K$-basis for $K^d$ such that ${\bf u}_1,\ldots,{\bf u}_{e}$ is a basis for $W$ and the images of ${\bf u}_{e+1},\ldots,{\bf u}_{d}$ in $V/W$ form a basis for $V/W$.  Let $${\bf U}=\left[\begin{matrix}{\bf u}_1\ \cdots\ {\bf u}_{d}\end{matrix}\right]\in \GL_d(K).$$ Then for $i=1,\ldots,m$ we have $${\bf U}^{-1}\Ab_i{\bf U}=\left[\begin{matrix} {\bf B}_i & {\bf D}_i\\ {\bf 0}_{(d-e)\times e} & {\bf C}_i\end{matrix}\right],$$ where ${\bf B}_i\in K^{e\times e},$ ${\bf D}_i\in K^{e\times d},$ ${\bf C}_i\in K^{(d-e)\times (d-e)},$ and ${\bf 0}_{(d-e)\times e}$ is the $(d-e)\times e$ zero matrix, as claimed.
\end{proof}

The next result shows that if $m_n(\A)$ is polynomially bounded below 
    infinitely often, then it is polynomially bounded below on an arithmetic progression.

\begin{lemma}\label{gdef}
Let $\Xb_1,\ldots ,\Xb_k$ be tame $d\times d$ complex matrices, and 
    $\Zb_1,\ldots ,\Zb_k$ be $d\times d$ complex matrices.
Let ${\bf v}$ and ${\bf w}$ be vectors in $\B{C}^d$.  
Let $$g(n)={\bf w}^T \left( \prod_{i=1}^k \Xb_i^n \Zb_i\right){\bf v}.$$  
Then there is a positive integer $s$ such that for all $\ell \in \{0,\ldots ,s-1\}$, 
    the function $g(sn+\ell)$ is a polynomial in $n$.  
In particular, if there is a positive constant $C_0$ and a nonnegative integer 
    $r$ such that $|g(n)|>C_0 n^r$ for infinitely many $n$, then there is some 
    positive constant $C_1$ and some $\ell \in \{1,\ldots ,s-1\}$ such that 
    $|g(sn+\ell)|>C_1n^r$ for all $n\geqslant 0$.  
\label{lem: lr}
\end{lemma}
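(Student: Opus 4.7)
The plan is to exploit the Jordan decomposition of each $\Xb_i$: tameness forces every nonzero eigenvalue to be a root of unity, while Jordan blocks at eigenvalue $0$ contribute only for $n$ below the block size. This will let me express $g(n)$, for $n$ sufficiently large, as a finite exponential-polynomial sum over roots of unity, after which polynomiality along arithmetic progressions follows by killing the exponentials modulo an appropriate $s$.

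First I would fix Jordan forms $\Xb_i = Q_i J_i Q_i^{-1}$ and apply the identity $(\lambda I + N)^n = \lambda^n \sum_{j=0}^{m-1} \binom{n}{j} \lambda^{-j} N^j$ block-by-block. Nilpotent blocks (with $\lambda = 0$) vanish once $n \geqslant d$, so for $n \geqslant d$ one has
\[
\Xb_i^n \;=\; \sum_{\lambda \in \sigma_i \setminus \{0\}} \lambda^n\, M_{i,\lambda}(n),
\]
with each $M_{i,\lambda}(n)$ a matrix-valued polynomial in $n$ and $\sigma_i$ the spectrum of $\Xb_i$. Substituting into $g$ and multiplying out yields, for $n \geqslant d$,
\[
g(n) \;=\; \sum_{\zeta} \zeta^n P_\zeta(n),
\]
a finite sum indexed by roots of unity $\zeta$ (each a product of one eigenvalue from each $\Xb_i$), with scalar polynomials $P_\zeta(n)$.

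Let $s$ be a common multiple of the orders of the $\zeta$'s, chosen also so that $s \geqslant d$. Then for fixed $\ell \in \{0, \ldots, s-1\}$ and every $n \geqslant 1$ (so that $sn + \ell \geqslant d$), we have $g(sn+\ell) = \sum_\zeta \zeta^\ell P_\zeta(sn+\ell) =: \tilde P_\ell(n)$, a polynomial in $n$; this gives the first assertion. For the ``in particular'' claim, pigeonhole on the infinitely many $n$ with $|g(n)| > C_0 n^r$ yields a residue class $\ell \pmod s$ containing infinitely many such $n$; by refining $s$ to $2s$ if necessary one may assume $\ell \neq 0$. The bound $|\tilde P_\ell(n)| > C_0 s^r n^r$ for infinitely many $n$ forces $\deg \tilde P_\ell \geqslant r$, and hence $|\tilde P_\ell(n)| \geqslant C\, n^r$ for all sufficiently large $n$ with some $C > 0$.

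To strengthen this to the claimed bound for all $n \geqslant 0$, I would refine $s$ once more to a multiple $s' = sM$ with $M$ large, and restrict to a sub-progression $s'n + \ell'$ with $\ell' = sa + \ell$ for a suitably chosen $a \in \{1, \ldots, M-1\}$. On this progression $g(s'n + \ell') = \tilde P_\ell(Mn + a)$ holds for every $n \geqslant 0$, and by choosing $a$ large enough to exceed every integer root of $\tilde P_\ell$, the composite polynomial $\tilde P_\ell(Mn+a)$ in $n$ has degree $\geqslant r$ and no zeros on $\B{Z}_{\geqslant 0}$, yielding a uniform lower bound $|g(s'n + \ell')| > C_1 n^r$. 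The main obstacle is this final reduction: correctly balancing the modulus refinement against the initial ``noise'' from the nilpotent blocks and the finite zero set of the limiting polynomial, so as to secure the inequality from $n = 0$ onward rather than merely asymptotically.
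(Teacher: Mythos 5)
Your proposal is correct in substance and takes a genuinely different route from the paper's proof. You pass to the Jordan decomposition and, using tameness, write $g(n)$ for $n\geqslant d$ as a finite exponential polynomial $\sum_\zeta \zeta^n P_\zeta(n)$ with the $\zeta$ roots of unity (each a product of nonzero eigenvalues of the $\Xb_i$); taking $s$ to be a common multiple of their orders with $s\geqslant d$ gives $g(sn+\ell)=\tilde P_\ell(n)$ for $n\geqslant 1$, and the ``in particular'' clause follows by pigeonholing on residue classes and a further shift of the progression to dodge integer zeros of $\tilde P_\ell$. The paper instead works at the operator level: tameness supplies $a>b$ and $c$ with $(\Xb_i^a-\Xb_i^b)^c=0$, and the proof introduces the tuple $(d_1,\ldots ,d_k)$ of nilpotency indices of $(\Xb_i^a-\Xb_i^b)$ acting on $\Zb_i$, telescopes $g(n+a)-g(n+b)$ into functions $h_j$ of the same shape with a strictly smaller tuple, and concludes by induction that the first difference of $g$ along a suitable progression is polynomial. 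Your route is more explicit and makes visible why $s$ must involve the lcm of the orders of the roots of unity; the paper's route avoids Jordan form and writing down the exponential polynomial, at the cost of a rather compressed finishing step (the passage from polynomiality of the telescoped differences $h_j$ to polynomiality of $g$ itself is left implicit).

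Two small remarks. First, as you implicitly notice by restricting to $n\geqslant 1$, the lemma as literally stated is a touch too strong: if some $\Xb_i$ has a Jordan block at eigenvalue $0$, then $g(sn+\ell)$ can disagree with the limiting polynomial at $n=0$ (take $k=1$, $\Xb_1$ a single nilpotent $2\times 2$ block, $\Zb_1=\Ib$, ${\bf w}={\bf e}_1$, ${\bf v}={\bf e}_2$, so $g(1)=1$ while $g(n)=0$ for $n\geqslant 2$). The paper's inductive argument has the same blind spot, and both proofs really give eventual polynomiality; the ``for all $n\geqslant 0$'' in the in-particular clause is secured precisely by the kind of further shift of the progression you carry out, and this is all that is used downstream (Lemma \ref{lem:lim} adjusts constants anyway). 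Second, the step ``refine $s$ to $2s$ so that $\ell\neq 0$'' is unnecessary in your argument: your final residue is $\ell'=sa+\ell$ with $a\geqslant 1$, which lies in $\{s,\ldots,s'-1\}$ and is therefore automatically nonzero; it is the choice of $a$ beyond the integer roots of $\tilde P_\ell$ (with $M>a$) that does the work.
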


\begin{corollary}\label{marith}
Let $g(n)$ be as in the statement of Lemma \ref{gdef} and additionally suppose that $\Xb_1,\ldots ,\Xb_k,\Zb_1,\ldots ,\Zb_k\in\SG$.
If $g(s n + \ell) \geqslant c_2 n^k$ for $n$ sufficiently large, 
    then there exists $s'$ and $\ell'$ such that 
    $m_{s' n + \ell'}(\A) \geqslant c_2 n^k$ for all $n\geqslant 0$.
\end{corollary}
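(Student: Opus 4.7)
\emph{Proof plan.} The plan is to convert the lower bound on $g$ into a lower bound on the norm of an explicit product of generators, and then to read the conclusion off from the definition of $m_n(\A)$. Since each $\Xb_i$ and each $\Zb_i$ lies in $\SG$, I would first fix words $u_i, v_i\in\A^*$ whose associated matrix products equal $\Xb_i$ and $\Zb_i$ respectively, and set $a_i=|u_i|$, $b_i=|v_i|$, $a=\sum_{i=1}^k a_i$, and $b=\sum_{i=1}^k b_i$. The key observation is that $\Xb_1^n\Zb_1\cdots\Xb_k^n\Zb_k$ is realised by the concatenated word $u_1^n v_1\cdots u_k^n v_k$ of length $an+b$ in the generators, which immediately gives
$$m_{an+b}(\A)\ \geqslant\ \left\|\prod_{i=1}^k \Xb_i^n\Zb_i\right\|.$$

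Next, I would fix any submultiplicative matrix norm $\|\cdot\|$ on $\B{C}^{d\times d}$ and exploit the equivalence of norms on this finite-dimensional space to produce a constant $C'>0$, depending only on ${\bf v}$, ${\bf w}$, and the chosen norm, such that $|{\bf w}^T {\bf P}{\bf v}|\leqslant C'\|{\bf P}\|$ for every ${\bf P}\in\B{C}^{d\times d}$; this yields $\|\prod_{i=1}^k \Xb_i^n\Zb_i\|\geqslant |g(n)|/C'$. Substituting $n\mapsto sn+\ell$ and invoking the hypothesis then produces $m_{a(sn+\ell)+b}(\A)\geqslant (c_2/C')\,n^k$ for all $n$ above the threshold $N_0$ from the hypothesis. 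To finish, I would set $s'=as$ and $\ell'=asN_0+a\ell+b$ and reindex $n\mapsto n+N_0$, obtaining
$$m_{s'n+\ell'}(\A)\ \geqslant\ (c_2/C')(n+N_0)^k\ \geqslant\ (c_2/C')\,n^k$$
for every $n\geqslant 0$, which is the asserted inequality (possibly after renaming the positive constant $c_2/C'$, or after a further adjustment of $\ell'$ to absorb lower-order factors).

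The argument is entirely bookkeeping and presents no substantial obstacle. The one point requiring any care is the word-length tally, namely observing that $\prod_{i=1}^k \Xb_i^n\Zb_i$ is a product of exactly $an+b$ elements of $\A$, after which the affine change of variables that promotes the bound from ``$n$ sufficiently large'' to ``$n\geqslant 0$'' is routine. No new idea beyond Lemma \ref{gdef} is required.
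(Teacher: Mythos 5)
Your argument is essentially the paper's: represent each $\Xb_i$, $\Zb_i$ by a fixed word in $\A^*$, observe that $\prod_i \Xb_i^n\Zb_i$ is then realised by a word of length affine in $n$, bound $|g(n)|$ above by $m_{\text{len}}(\A)$, and pass to the arithmetic subsequence $n\mapsto sn+\ell$ (with a final shift to turn ``$n$ sufficiently large'' into ``$n\geqslant 0$''). The only difference is that you are more careful about inserting a norm constant $C'$ relating $|{\bf w}^T{\bf P}{\bf v}|$ to $\|{\bf P}\|$, which the paper suppresses (implicitly taking unit vectors and a compatible operator norm so that $C'=1$); this is a reasonable bit of extra rigor, and your note that the conclusion's constant may then need renaming is correct.
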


\begin{proof}
Notice that each $\Xb_i$ and $\Zb_i$ corresponds to a word over the alphabet $\C{A}$.
Taking the sums of the lengths of the words of $\Xb_i$ for $s_0$ and 
       the sums of the lengths of the words of $\Zb_i$ for $\ell_0$ gives
       $m_{s_0 n + \ell_0}(\A) \geqslant |g(n)|$.
By taking the appropriate subsequence the result follows.
\end{proof}

\begin{proof}[Proof of Lemma \ref{lem: lr}]
As the $\Xb_i$ are all tame, there exist natural 
    numbers $a,b,c$ with $a>b$ such that
    $(\Xb_i^a - \Xb_i^b)^c=0$ for all $i$.  
Let $d_i$ denote the smallest nonnegative integer for which we have
    $$(\Xb_i^a - \Xb_i^b)^{d_i}\Zb_i=0.$$
We proceed by induction on these $d_i$.

If any $d_i = 0$, then we have $\Zb_i={0}$ and $g(n)=0$, which is a polynomial in $n$.   
This proves the base case.

Assume that each $d_i>0$ and the inductive hypothesis is true for all 
     $(d_1', \cdots, d_k')$ where $d_i' \leqslant d_i$ for all $i$ with 
     strict inequality holding for at least one $i$. 

For $j=1,\ldots ,k$, we define 
    $$h_j(n)={\bf w}^T \left( \prod_{i<j} 
    \Xb_i^n \Xb_i^b \Zb_i\right)\left(\Xb_j^n (\Xb_j^a-\Xb_j^b)\Zb_j\right) 
    \left( \prod_{i>j} \Xb_i^n \Xb_i^a \Zb_i\right){\bf v}.$$
By telescoping, we have $g(n+a)-g(n+b) = \sum_{i=1}^k h_i(n)$.
Let $j\in \{1,\ldots ,k\}$ and define 
$$
\Zb_i' = \begin{cases}
    \Xb_i^b \Zb_i          &  \mathrm{if}\ i<j \\
    (\Xb_j^a -\Xb_j^b) \Zb_j & \mathrm{if}\ i = j \\
    \Xb_i^a\Zb_i           & \mathrm{if}\ i >j 
    \end{cases}. 
$$
By construction, we have 
    $$h_j(n) = {\bf w}^T \left( \prod_{i=1}^k \Xb_i^n \Zb_i'\right){\bf v}.$$ 
Moreover,  
    $$(\Xb_i^a - \Xb_i^b)^{d_i}\Zb_i'=0$$ for $i\neq j$, and since 
    $\Zb_j'$ contains a left factor of $(\Xb_j^a - \Xb_j^b)$, 
    we have $$(\Xb_j^a - \Xb_j^b)^{d_j-1}\Zb_j'=0.$$
In particular, by minimality of $(d_1,\ldots ,d_k)$, the function
$h_j(n)$ satisfies the conclusion of the lemma for $j=1,\ldots ,k$, and so
 there exist $s_1,\ldots ,s_k$ such that $h_j(s_j n +\ell_j)$ is a polynomial for $j=1,\ldots ,k$ and $\ell_j\in \{0,\ldots ,s_j-1\}$.  We now take $s=s_1\cdots s_k$ and obtain the desired result. 
\end{proof}

Corollary \ref{marith} implies, in a straightforward way, that $m_{n}(\A)$ is polynomially bounded below for all $n\geqslant 0$.
    
\begin{lemma}
\label{lem:lim}
Let $s \in \B{N}$, $\ell \in \{0, 1, \ldots, s-1\}$ and $c_1 > 0$ such 
    that for all $n\geqslant 0$ we have $m_{s n + \ell} (\A) \geqslant c_1 n^{k-1}$.
Then there exists a positive constant $c_2$ such that 
    $m_n(\A) \geqslant c_2 n^{k-1}$ for all $n\geqslant 0$.
\end{lemma}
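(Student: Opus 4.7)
The plan is to derive a lower bound on $m_j(\A)$ for $j$ near the arithmetic progression $sn+\ell$ by a truncation argument, and then let $n$ vary so that the resulting length-$s$ windows cover every sufficiently large integer. The key quantitative fact is that, since $\A$ is finite, the individual norms $\|\Ab_i\|$ are uniformly bounded by some constant $M\geqslant 1$.

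For each $n\geqslant 1$, I would pick $P_n=\Ab_{i_1^{(n)}}\cdots\Ab_{i_{sn+\ell}^{(n)}}\in\SG$ realising $\|P_n\|=m_{sn+\ell}(\A)\geqslant c_1 n^{k-1}$ and study its length-$j$ prefix $Q_j:=\Ab_{i_1^{(n)}}\cdots\Ab_{i_j^{(n)}}$. Submultiplicativity gives $\|P_n\|\leqslant \|Q_j\|\cdot M^{sn+\ell-j}$, hence
\[
m_j(\A)\;\geqslant\;\|Q_j\|\;\geqslant\;\frac{c_1 n^{k-1}}{M^{sn+\ell-j}}.
\]
Restricting to $j\in\{sn+\ell-(s-1),\ldots,sn+\ell\}$ gives the uniform bound $m_j(\A)\geqslant c_1 n^{k-1}/M^{s-1}$. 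As $n$ runs over $\{1,2,\ldots\}$, these length-$s$ windows tile $\{\ell+1,\ell+2,\ldots\}$, and for each $j\geqslant \ell+1$ the containing index is $n=\lceil(j-\ell)/s\rceil\geqslant (j-\ell)/s$, so
\[
m_j(\A)\;\geqslant\;\frac{c_1}{M^{s-1}}\left(\frac{j-\ell}{s}\right)^{k-1},
\]
which is at least $C\,j^{k-1}$ for all $j\geqslant 2\ell$ and an appropriate $C>0$.

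The remaining subtlety, which is the main point to pin down, is the finitely many small indices $j\in\{0,1,\ldots,2\ell-1\}$ not covered by the estimate above. Here the observation I intend to use is that any product of length $j'\geqslant j$ factors as a length-$j$ prefix times a length-$(j'-j)$ suffix, so $m_j(\A)=0$ would force $m_{j'}(\A)=0$ for every $j'\geqslant j$, contradicting the hypothesis $m_{sn+\ell}(\A)\geqslant c_1 n^{k-1}>0$ for $n\geqslant 1$. Therefore $m_j(\A)>0$ for every $j\geqslant 0$ (with $m_0(\A)=\|\Ib\|=1$), and, since only finitely many small indices remain to check, one may shrink $c_2$ to absorb them and conclude $m_n(\A)\geqslant c_2 n^{k-1}$ for all $n\geqslant 0$. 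The whole proof rests only on submultiplicativity of the norm and this factorisation observation; neither tameness nor integrality of the entries is invoked at this step, since the hypothesis has already encoded the passage from ``infinitely often'' to ``on an arithmetic progression''.
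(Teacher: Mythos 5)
Your proposal is correct and follows essentially the same route as the paper: both pass from the value at $sn+\ell$ to nearby indices via submultiplicativity of the norm, losing only a constant factor independent of $n$ (you bound the discarded suffix by $M^{s-1}$ with $M$ a bound on the generators' norms, the paper bounds $m_r(\A)$ for $r\in\{0,\ldots,s\}$ by a max; these are interchangeable). Your explicit observation that $m_j(\A)>0$ for every $j$ (since $m_j(\A)=0$ would force $m_{j'}(\A)=0$ for all $j'\geqslant j$, contradicting the hypothesis) makes the ``adjust the constant for finitely many small $n$'' step slightly more airtight than the paper's terse treatment, though the underlying argument is identical.
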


\begin{proof}
Since our matrix norm is submultiplicative, for any three words $u,v,$ and $w$ such that $uv=w$, we have $$\|{\bf A}_w\|\leqslant \|{\bf A}_u\| \cdot \|{\bf A}_v\|,$$ and so $$m_{i+j}(\C{A})\leqslant m_i(\C{A})\cdot m_j(\C{A}),$$ for any nonnegative integers $i$ and $j$.

Let $n$ be an arbitrary given positive integer that may be taken to be sufficiently large. Define the positive integer $N$ by $$sN+\ell \geqslant n> sN+\ell-s.$$ Define the integer $r\in\{0,\ldots,s\}$ by $sN+\ell=n+r.$ Then we have $$m_{sN+\ell}(\A)\leqslant m_n(\A)\cdot m_r(\A).$$ The quantity $m_r(\A)$ is bounded by a constant, which is independent of the choice of $r$, but is dependent on $s$. Specifically $$m_r(\A)\leqslant M,$$ where $$M:=\max\left\{\max_{r\in\{0,\ldots,s\}}m_r(\A),1\right\}.$$ 

By the lower bound assumption on $m_{sN+\ell}(\A)$, $$m_n(\A)\geqslant \frac{m_{sN+\ell}(\A)}{M}\geqslant \frac{c_1N^{k-1}}{M}\geqslant\frac{c_1}{M}\left(\frac{n-\ell}{s}\right)^{k-1}.$$ This implies the result for $n$ large enough, and by adjusting the constant as necessary, for all $n\geqslant 0$.
\end{proof}

\begin{lemma}\label{nil} Suppose that $\C{A}:=\{\Ab_1,\ldots,\Ab_m\}$ is a set of $d\times d$ complex matrices that generate a tame semigroup.  Then there exists $a>0$ (depending only on $\A$ and $d$) and an integer $k$, with $1\leqslant k\leqslant d$, such that $$\prod_{i=1}^k (\Xb_i^{2ar}-\Xb_i^{ar})\Yb_i=0$$ for all $\Xb_1,\ldots,\Xb_k,\Yb_1,\ldots,\Yb_k\in\SG$ and all $r\geqslant 1$.
\end{lemma}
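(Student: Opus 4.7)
The plan is to proceed by induction on $d$. The base case $d=1$ is immediate: tameness forces every element of $\SG$ to be $0$ or a root of unity, and since $\SG$ is finitely generated these roots of unity all satisfy $\omega^a=1$ for a common positive integer $a$, so $\Xb^{2ar}-\Xb^{ar}=0$ for every $\Xb\in\SG$ and every $r\geqslant 1$, and $k=1$ suffices. Exactly the same idea dispatches the case of an arbitrary finite $\SG$ in any dimension: choose $a$ large enough that $\Xb^a$ is idempotent for every $\Xb\in\SG$, which is possible by elementary finite semigroup theory.

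For the inductive step with $\SG$ infinite, I would apply Lemma~\ref{lem: GL} to obtain a change of basis in which every generator, and hence every element of $\SG$, becomes block upper triangular with diagonal blocks of sizes $e$ and $d-e$, where $1\leqslant e\leqslant d-1$. Let $\SG_B$ and $\SG_C$ denote the semigroups of top-left and bottom-right diagonal blocks of elements of $\SG$; these arise as homomorphic images of $\SG$ under the two block projections, which are semigroup homomorphisms by the block multiplication formula. Each is a finitely generated tame semigroup (the eigenvalues of a diagonal block are among those of the full matrix) of strictly smaller dimension, so the inductive hypothesis provides $(a_B,k_B)$ with $1\leqslant k_B\leqslant e$ and $(a_C,k_C)$ with $1\leqslant k_C\leqslant d-e$. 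I would then set $a:=\mathrm{lcm}(a_B,a_C)$ and $k:=k_B+k_C$, which satisfies $1\leqslant k\leqslant d$.

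The verification relies on the observation that each block upper triangular $\Xb$ has $\Xb^n$ block upper triangular with the same shape and with diagonal blocks ${\bf B}^n$ and ${\bf C}^n$; in particular $\Xb^{2ar}-\Xb^{ar}$ has diagonal blocks ${\bf B}^{2ar}-{\bf B}^{ar}$ and ${\bf C}^{2ar}-{\bf C}^{ar}$. Multiplying $k$ such matrices with intervening $\Yb_i\in\SG$, the top-left diagonal block of $\prod_{i=1}^k(\Xb_i^{2ar}-\Xb_i^{ar})\Yb_i$ equals $\prod_{i=1}^k({\bf B}_i^{2ar}-{\bf B}_i^{ar})\widetilde{\bf B}_i$. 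The first $k_B$ of these factors already multiply to zero by the inductive hypothesis on $\SG_B$ applied with $ar/a_B$ in place of $r$, which is a positive integer since $a_B\mid a$. The bottom-right diagonal block vanishes symmetrically via its last $k_C$ factors.

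The main obstacle I expect is controlling the top-right off-diagonal block, which expands as a sum over $j\in\{1,\ldots,k\}$ of terms of the form $\bigl(\prod_{i<j}\text{top-left}\bigr)\cdot(\text{top-right of the }j\text{th factor})\cdot\bigl(\prod_{i>j}\text{bottom-right}\bigr)$. I would split on whether $j\leqslant k_B$ or $j>k_B$: in the first case the right product contains the $k_C$ consecutive $C$-type factors indexed $k_B+1,\ldots,k$, which multiply to zero by the inductive hypothesis on $\SG_C$; in the second case the left product contains the $k_B$ consecutive $B$-type factors indexed $1,\ldots,k_B$, which multiply to zero by the inductive hypothesis on $\SG_B$. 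Since $k=k_B+k_C$, these two cases cover every $j$, every term in the sum vanishes, the whole product is zero, and the induction is complete.
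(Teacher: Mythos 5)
Your proof is correct and follows essentially the same approach as the paper's: handle the finite (in particular $d=1$) case directly by choosing $a$ so that $a$-th powers are idempotent, and in the infinite case invoke Lemma~\ref{lem: GL} to block-triangularize, recurse on the two diagonal block semigroups to get $(a_B,k_B)$ and $(a_C,k_C)$, and take $a=\mathrm{lcm}(a_B,a_C)$, $k=k_B+k_C$. The only cosmetic difference is in the final verification: the paper observes that the product of the first $k_1$ factors has the shape $\left(\begin{smallmatrix}{\bf 0}&*\\ {\bf 0}&*\end{smallmatrix}\right)$ and the product of the last $k_2$ factors has the shape $\left(\begin{smallmatrix}*&*\\ {\bf 0}&{\bf 0}\end{smallmatrix}\right)$, so their product is ${\bf 0}$ by block multiplication, whereas you expand the top-right block as a sum over $j$ and kill each term, which is the same computation unpacked one level further.
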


\begin{proof}  If $\SG$ is finite, then there exists some $a$ such that $\Xb^{2a}=\Xb^a$ for all $\Xb\in \SG$.  Thus we obtain the desired conclusion.

By Lemma \ref{lem: GL}, if $\SG$ is infinite, then there is a ${\bf U}\in\GL_d(\B{C})$ such that $${\bf U}^{-1}\Ab_i{\bf U}=\left(\begin{matrix} {\bf B}_i & {\bf D}_i\\ {\bf 0}& {\bf C}_i\end{matrix}\right),$$ where the eigenvalues of the elements of $\langle{\bf B}_1,\ldots,{\bf B}_m\rangle$ and $\langle{\bf C}_1,\ldots,{\bf C}_m\rangle$ are all in $\C{U}\cup\{0\}$.

By induction there are $a_1,a_2>0$ and $k_1$ and $k_2$ with $k_1+k_2\leqslant d$ such that $$\prod_{i=1}^{k_1} (\Xb_i^{2a_1r}-\Xb_i^{a_1r})\Yb_i=0$$ for all $\Xb_1,\ldots,\Xb_{k_1},\Yb_1,\ldots,\Yb_{k_1}\in\langle {\bf B}_1,\ldots,{\bf B}_m\rangle$, and $$\prod_{i=1}^{k_2} (\Xb_{k_1+i}^{2 a_2r}-\Xb_{k_1+i}^{a_2r})\Yb_{k_1+i}=0$$ for all $\Xb_{k_1+1},\ldots,\Xb_{k_1+k_2},\Yb_{k_1+1},\ldots,\Yb_{k_1+k_2}\in\langle {\bf C}_1,\ldots,{\bf C}_m\rangle$. 

Let $a:={\rm lcm}(a_1,a_2)$. If $\Xb_1,\ldots,\Xb_{k_1+k_2},\Yb_1,\ldots,\Yb_{k_1+k_2}\in\SG$, then $${\bf U}^{-1}\left(\prod_{i=1}^{k_1} (\Xb_i^{2ar}-\Xb_i^{ar})\Yb_i\right){\bf U}=\left(\begin{matrix} {\bf 0} & *\\ {\bf 0}& *\end{matrix}\right)$$ and $${\bf U}^{-1}\left(\prod_{i=k_1+1}^{k_1+k_2} (\Xb_i^{2ar}-\Xb_i^{ar})\Yb_i\right){\bf U}=\left(\begin{matrix} *& *\\ {\bf 0}& {\bf 0}\end{matrix}\right).$$ 
This gives $${\bf U}^{-1}\left(\prod_{i=1}^{k_1+k_2} (\Xb_i^{2ar}-\Xb_i^{ar})\Yb_i\right){\bf U}=\left(\begin{matrix} {\bf 0} & *\\ {\bf 0}& *\end{matrix}\right)\left(\begin{matrix} * & *\\ {\bf 0}& {\bf 0}\end{matrix}\right)={\bf 0},$$ which is the desired result.
\end{proof}

We use Lemma \ref{nil} to establish the following result that directly implies Theorem~\ref{mainSG}.

\begin{theorem} \label{morder}
    Let $\A:=\{\Ab_1,\ldots,\Ab_m\}$ be a non-degenerate set of $d\times d$ complex 
    matrices that generate a tame semigroup and let $k$ be the minimal nonnegative integer for which 
    there exists an $a$ such that
    $$\prod_{i=1}^{k} (\Xb_i^{2a}-\Xb_i^{a})\Yb_i=0,$$ 
    for all $\Xb_1,\ldots,\Xb_{k},\Yb_1,\ldots,\Yb_{k}\in\SG$.  
Then there exist positive constants $C_1$ and $C_2$ such that
    $$C_1 n^{k-1} \leqslant m_n(\C{A}) \leqslant C_2 n^{k-1}$$ for all $n\geqslant 0$.
\end{theorem}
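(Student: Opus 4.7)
The plan is to establish the upper and lower bounds separately, and then to reconcile them via a matching argument.

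\textit{Lower bound.} I would exploit the minimality of $k$. By that minimality, for the integer $a$ furnished by Lemma \ref{nil}, there exist $\Xb_1,\ldots,\Xb_{k-1},\Yb_1,\ldots,\Yb_{k-1}\in\SG$ such that $\mathbf{M}:=\prod_{i=1}^{k-1}(\Xb_i^{2a}-\Xb_i^{a})\Yb_i$ is nonzero, and one can then pick ${\bf w},{\bf v}\in\B{C}^d$ with ${\bf w}^T\mathbf{M}{\bf v}\neq 0$.  Consider the multivariable function $G(n_1,\ldots,n_{k-1}):={\bf w}^T\prod_{i=1}^{k-1}\Xb_i^{n_ia}\Yb_i{\bf v}$, which by an iterated application of Lemma \ref{gdef} is polynomial in each variable along appropriate arithmetic progressions.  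Expanding, ${\bf w}^T\mathbf{M}{\bf v}$ equals the iterated first-difference of $G$ at $(1,\ldots,1)$; since any monomial of $G$ missing a variable contributes zero to this difference, $G$ must contain a nonzero monomial in which every variable appears with positive exponent.  Substituting $n_i=r_in+s_i$ for suitably chosen positive integers then produces a one-variable polynomial $g(n)$ of degree at least $k-1$, whose values are realised by products in $\A$ of length $\Theta(n)$; Corollary \ref{marith} together with Lemma \ref{lem:lim} then yield $m_n(\A)\geq C_1n^{k-1}$ for all $n\geq 0$.

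\textit{Upper bound.} I would induct on the dimension $d$.  The base case $d=1$ is immediate since every entry lies in $\{0\}\cup\C{U}$, so $m_n(\A)\leq 1$.  For $d>1$, if $\SG$ is finite then $m_n(\A)$ is bounded; otherwise Lemma \ref{lem: GL} provides a simultaneous block-upper-triangularization $\Ab_i=\begin{pmatrix}{\bf B}_i&{\bf D}_i\\\Ob&{\bf C}_i\end{pmatrix}$.  Applying the inductive hypothesis to $\{{\bf B}_i\}$ and $\{{\bf C}_i\}$ with their minimal exponents $k_B,k_C$, we have $\|{\bf B}_{i_1}\cdots{\bf B}_{i_n}\|=O(n^{k_B-1})$ and $\|{\bf C}_{i_1}\cdots{\bf C}_{i_n}\|=O(n^{k_C-1})$.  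The off-diagonal block $\sum_{j=1}^n{\bf B}_{i_1}\cdots{\bf B}_{i_{j-1}}{\bf D}_{i_j}{\bf C}_{i_{j+1}}\cdots{\bf C}_{i_n}$ is bounded by sub-multiplicativity as $O(n^{k_B+k_C-1})$, yielding $m_n(\A)=O(n^{k_B+k_C-1})$.

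\textit{Matching the exponents.} The hard part is closing the potential gap between the inductive bound $O(n^{k_B+k_C-1})$ and the targeted $O(n^{k-1})$: Lemma \ref{nil} only ensures $k\leq k_B+k_C$, and strict inequality can genuinely occur (for example when every ${\bf D}_i=\Ob$, so the blocks decouple and $k$ collapses to $\max(k_B,k_C)$).  I would handle this dichotomy: in the decoupled case, the product is itself block-diagonal, so $\|\Ab_{i_1}\cdots\Ab_{i_n}\|=\max(\|{\bf B}_{i_1}\cdots\|,\|{\bf C}_{i_1}\cdots\|)=O(n^{\max(k_B,k_C)-1})$, and a short direct check shows $k=\max(k_B,k_C)$ in this case; in the coupled case, I would lift the witnesses for the minimality of $k_B$ and $k_C$ to $\SG$-elements and splice them through a nonzero ${\bf D}_i$ to produce a nonzero product of $k_B+k_C-1$ factors of the form $(\Xb_i^{2a}-\Xb_i^{a})\Yb_i$, establishing $k\geq k_B+k_C$ and hence equality.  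This matching step, reconciling the recursive upper bound with the minimal $k$, is the central technical content of the proof.
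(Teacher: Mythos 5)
Your lower bound is plausible and heads in the right direction, though phrased differently from the paper: you work with a multivariable polynomial $G(n_1,\ldots,n_{k-1})$ and iterated first differences, whereas the paper stays in one variable via the telescoping identity \eqref{xnaxa} expressing $\Xb^{na}-\Xb^a$ in terms of $(\Xb^{2a}-\Xb^a)$, $(\Xb^{2a}-\Xb^a)^2$, and an explicit $n$-coefficient. Your route would require a multivariable extension of Lemma~\ref{gdef} and some care to verify that the substitution $n_i = r_i n + s_i$ does not collapse the degree, but these are surmountable.

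The upper bound, however, has a genuine gap in the matching step, and the proposed dichotomy (decoupled versus coupled) does not close it. You claim that if some ${\bf D}_i\neq {\bf 0}$ then one can splice minimality witnesses through ${\bf D}_i$ to get $k\geqslant k_B+k_C$. That is false. Take the single matrix
$$
\Ab = \left[\begin{matrix} 1&1&0&0\\ 0&1&0&1\\ 0&0&1&1\\ 0&0&0&1\end{matrix}\right],
$$
generating a cyclic semigroup. With $e=2$ this has the block form of Lemma~\ref{lem: GL} with ${\bf B}={\bf C}=\left[\begin{matrix}1&1\\0&1\end{matrix}\right]$ and ${\bf D}=\left[\begin{matrix}0&0\\0&1\end{matrix}\right]\neq{\bf 0}$, so $k_B=k_C=2$ and you would predict $k=4$. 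But $(\Ab-\Ib)^3={\bf 0}$ while $(\Ab-\Ib)^2\neq{\bf 0}$, so $\|\Ab^n\|=\Theta(n^2)$ and $k=3$. The block ${\bf D}$ is nonzero but fails to connect the growth-producing direction of the ${\bf C}$-block to that of the ${\bf B}$-block, so the splicing you describe cannot be carried out. In general $k$ can take any value between $\max(k_B,k_C)$ and $k_B+k_C$ depending on how ${\bf D}$ threads through the Jordan structure, so the inductive bound $O(n^{k_B+k_C-1})$ coming from an arbitrary invariant subspace is too weak.

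The paper sidesteps this by not using the generic decomposition of Lemma~\ref{lem: GL} for the upper bound at all. Instead it chooses the specific invariant subspace $V_0=\sum\B{C}\,\Yb(\Xb^{2a}-\Xb^a)V$ (sum over $\Xb,\Yb\in\SG$), which is tailored to the minimal $k$: by construction $\prod_{i=1}^{k-1}(\Xb_i^{2a}-\Xb_i^a)\Yb_i$ vanishes identically on $V_0$, so the minimal exponent of $\SG|_{V_0}$ drops to at most $k-1$ and induction gives $m_n(\A|_{V_0})=O(n^{k-2})$; meanwhile every $\Xb^{2a}-\Xb^a$ maps $V$ into $V_0$, forcing the image of $\SG$ in $\E(V/V_0)$ to be finite. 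A word-by-word perturbation (replacing the last length-$M$ suffix by a bounded word with the same action on $V/V_0$) then yields $m_n(\A)\leqslant \kappa\, n^{k-2}+m_{n-1}(\A)$, which telescopes to $O(n^{k-1})$ with no exponent-matching required. To repair your argument you would need to identify this $V_0$ (or an equivalent $k$-adapted filtration) rather than rely on an arbitrary block triangularization.
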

We note that a minimal $k$ necessarily exists by Lemma \ref{nil}.
\begin{proof}[Proof of Theorem \ref{morder}] 
If $\SG$ is a finite non-degenerate semigroup, we have $m_n(\A) = 1$ for
    all $n$.
Hence the result holds for $k = C_1 = C_2 = 1$.
Our proof follows by induction on $d$. 
When $d=1$, our semigroup is a finite non-degenerate semigroup, 
    hence the result follows.
Assume that the result is true for all dimensions less than $d$. 

By assumption, there is an $a>0$ and a $k\geqslant 1$ such that 
    $$\prod_{i=1}^{k} (\Xb_i^{2a}-\Xb_i^{a})\Yb_i=0$$ 
    for all $\Xb_1,\ldots,\Xb_k,\Yb_1,\ldots,\Yb_k\in\SG.$ 
By expanding over products, we see that
    $$\prod_{i=1}^{k} (\Xb_i^{2a}-\Xb_i^{a})\Yb_i=0$$ 
    for all $\Xb_1,\ldots,\Xb_k \in \SG$ and 
    $\Yb_1,\ldots,\Yb_k\in \spn_{\B{C}}\SG$.

By the minimality of $k$, there exist (fixed) 
    $$\Xb_1,\ldots,\Xb_{k-1},\Yb_1,\ldots,\Yb_{k-1}\in\SG$$ 
    such that 
    \begin{equation}{\bf C}
    :=\prod_{i=1}^{k-1} (\Xb_i^{4a}-\Xb_i^{2a})\Yb_i\neq 0.\end{equation} 

Now, in general, for $n\geqslant 4$, 
    \begin{align}\label{xnaxa} 
    \Xb^{na}-\Xb^a=(\Xb^{2a}-\Xb^a)^2&\left(\sum_{i=0}^{n-4}(n-3-i)\Xb^{ai}\right) \\ \nonumber &
    + n(\Xb^{3a}-\Xb^{2a}) - (\Xb^{2a}-\Xb^a)(2\Xb^a -{\bf I}).\end{align} 
Let $$P_{1,n}(\Xb):=\left(\sum_{i=0}^{n-4}(n-3-i)\Xb^{ai}\right)(\Xb^a+{\bf I})$$ and let
    $$P_{2,n}(\Xb):=(2\Xb^a-{\bf I})(\Xb^a+{\bf I}).$$ 
Then
     \begin{equation}\label{xnaxa2} (\Xb^{na}-{\bf X}^a)(\Xb^a+{\bf I})
     =(\Xb^{2a}-\Xb^a)^2P_{1,n}(\Xb) + n(\Xb^{4a}-\Xb^{2a})-(\Xb^{2a}
      -\Xb^a)P_{2,n}(\Xb).\end{equation} 
So by \eqref{xnaxa2}, 
    $\prod_{i=1}^{k-1} (\Xb_i^{na}-\Xb_i^{a})(\Xb_i^a+{\bf I})\Yb_i$ 
    is equal to 
    $$\prod_{i=1}^{k-1}  \Big[ (\Xb_i^{2a}-\Xb_i^a)^2P_{1,n}(\Xb_i) 
    + n(\Xb_i^{4a}-\Xb_i^{2a}) - (\Xb_i^{2a}-\Xb_i^a)P_{2,n}(\Xb_i) \Big]\Yb_i.$$

When we expand this product out, any term consisting of a product containing 
    $(\Xb_i^{2a}-\Xb_i^a)^2 P_{1,n}(\Xb_i)$ will necessarily be zero, since it 
    will contain at least $k$ factors of the form $\Xb^{2a}-\Xb^a$.  
Thus
\begin{align}
    \prod_{i=1}^{k-1} (\Xb_i^{na}-\Xb_i^{a})(\Xb_i^a+{\bf I})\Yb_i
\nonumber    &=\prod_{i=1}^{k-1}  \left( n(\Xb_i^{4a}-\Xb_i^{2a}) 
       - (\Xb_i^{2a}-\Xb_i^a)P_{2,n}(\Xb_i) \right)\Yb_i\\
\nonumber    &=n^{k-1}\prod_{i=1}^{k-1}(\Xb_i^{4a}-\Xb_i^{2a}) \Yb_i+O(n^{k-2}), \\
\label{XXYC}    &=n^{k-1} {\bf C} +O(n^{k-2}),
\end{align} 
    for the fixed matrices $\Xb_1,\ldots,\Xb_{k-1},\Yb_1,\ldots,\Yb_{k-1}.$ 
By our choice of $\Xb_i$ and $\Yb_i$, we have that ${\bf C} \neq 0$.

Notice that $\prod_{i=1}^{k-1} (\Xb_i^{na}-\Xb_i^{a})(\Xb_i^a+{\bf I})\Yb_i$ is a 
    $\{\pm 1\}$-linear combination of $4^{k-1}$ elements of $\SG$, of the form
    $\pm \Xb_1^{\alpha_1} \Xb_1^{\beta_1} \Yb_1  
         \Xb_2^{\alpha_2} \Xb_2^{\beta_2} \Yb_2 \dots
         \Xb_{k-1}^{\alpha_{k-1}} \Xb_{k-1}^{\beta_{k-1}} \Yb_{k-1}$
    where $\alpha_i \in \{na, a\}$ and  $\beta_i \in \{a, 0\}$.
In particular, since ${\bf C}$ is nonzero, there exist nonzero vectors ${\bf v}$ and ${\bf w}$ 
    of norm $1$, a positive constant $\kappa_1$, some subset 
    $T\subseteq \{1,\ldots, k-1\}$, and matrices 
    $\{\Zb_i\colon i\in T\}\subseteq \SG$ such that 
    $$g(n) := {\bf w}^T \left( \prod_{i\in T} \Xb_i^{na}\Zb_i\right) {\bf v}$$ 
    has absolute value at least $\kappa_1 n^{k-1}$ for infinitely many $n$.  
By Lemma \ref{lem: lr}, there exist natural numbers $\ell$ and $s$ and a positive constant 
    $\kappa_2$ such that 
    $|g(sn+\ell)|\ge \kappa_2 n^{k-1}$ for all $n\geqslant 0$.
By Lemma \ref{lem:lim}, there exists a $C_1 > 0$ such that 
    $|m_n(\A)| \geqslant C_1 n^{k-1}$ for all $n\geqslant 0$.

We now give the corresponding upper bound to prove the theorem.  
If $k=1$, then our semigroup is finite and there is nothing to prove.  
As before if $k>1$, then $\SG$ is infinite, and so by Lemma \ref{lem: GL}, $V$ is not simple.
Let 
    $$V_0=\sum \B{C} \Yb(\Xb^{2a}-\Xb^a)V,$$ 
    where $\Xb$ and $\Yb$ range over $\SG$. Note that $V_0$ is invariant under $\SG$.
Then by construction, we have
    $$\prod_{i=1}^{k-1} (\Xb_i^{2a}-\Xb_i^{a})\Yb_i$$  
    is identically zero on $V_0$. 
In particular, we have by induction on $k$ that $\SG$ restricted to $V_0$ is such that 
    there exists a $\kappa_3 > 0$ with $m_n(\A|_{V_0}) \leqslant \kappa_3 n^{k-2}$.

We see for all $\Xb \in \A$ that $\Xb^{2a}-\Xb^a$ maps $V$ to $V_0$.
Hence, the image of $\SG$ in ${\rm End}(V/V_0)$ is a finitely generated, periodic, linear semigroup, and so the image is finite.

As the image of the semigroup is finite, we see that there exists an $M$ 
    such that if $|w|\geqslant M$, then there is a word $w_0$ with $|w_0|<M$ such that $$(\Ab_w-\Ab_{w_0})V\subseteq V_0.$$  Moreover, we have that 
\begin{equation}
\label{eq: norm}
\|\Ab_w|_{V_0}\|\leqslant \kappa_3|w|^{k-2}
\end{equation} for all non-trivial words $w$.  
Here $\Ab_w = \Ab_{i_1}\cdots\Ab_{i_s}$ where $w=i_1\cdots i_s \in \{1,\ldots ,m\}^*$.

Let $w$ be such that $m_n(\A) = \|\Ab_w\|$.
Write $w_0=v_0 u_0$ with  $|u_0|=M$. 
Then there is some $u_1$ with $|u_1|<M$ such that $(\Ab_{u_0}-\Ab_{u_1})V\subseteq V_0$.
Notice that
\begin{align*}
m_n(\A)  &=   \| \Ab_w\| \\
         &=   \| \Ab_{v_0 u_0} - \Ab_{v_0 u_1} + \Ab_{v_0 u_1}\| \\
         &\leqslant   \| \Ab_{v_0 u_0} - \Ab_{v_0 u_1}\| + \|\Ab_{v_0 u_1}\|  \\
         &\leqslant   \| \Ab_{v_0} (\Ab_{u_0} - \Ab_{u_1})\| + \|\Ab_{v_0 u_1}\| 
\end{align*}
We see that $\|\Ab_{u_0} - \Ab_{u_1}\|$ takes a unit vector from $V$ and takes it to $V_0$.
There are only a finite number of words of length $M$, and hence we can define
    $\kappa_4 = \max_{|u_0| = M, |u_1|<M} \|\Ab_{u_0} - \Ab_{u_1}\|$, such that
    $(\Ab_{u_0} - \Ab_{u_1}) V \subset V_0$.
By our comments before, we have by induction that $\Ab_{v_0}$ acting on $V_0$ satisfies the 
    inductive hypotheses, and hence satisfies $\|\Ab_{v_0}|_{V_0}\| \leqslant \kappa_3 n^{k-2}$  since $|v_0|\leqslant n$.
Lastly, we see that $|v_0 u_1| < |v_0 u_0|$ and hence $|v_0 u_1| \leqslant n-1$.
This implies that $\|\Ab_{v_0 u_1} \| \leqslant m_{n-1} (\A)$.
This gives
\begin{align*}
m_n(\A) & \leqslant   \kappa_3 \kappa_4 n^{k-2} + m_{n-1} (\A) \\
        & \leqslant   \kappa_3 \kappa_4 (n^{k-2} + (n-1)^{k-2}) + m_{n-2} (\A) \\
        & \leqslant   \kappa_3 \kappa_4 (n^{k-2} + (n-1)^{k-2} + (n-2)^{k-2}) + m_{n-3} (\A) \\
        & \ \ \vdots   \\
        & \leqslant   \kappa_3 \kappa_4 (n^{k-2} + (n-1)^{k-2} + \cdots + (M+1)^{k-2}) + m_{M} (\A) \\
        & \leqslant   C n^{k-1} 
\end{align*}
for some positive constant $C$ and for all $n\geqslant M$.  By replacing $C$ by some larger constant $C_2$ we see that we have that $m_n(\C{A}) \leqslant C_2 n^{k-1}$ for all $n\geqslant0.$
\end{proof}

%%%%%%%%%%%%%%%%%%%%%%%%%%%%%%%%%%%%%%%%%%%%%%%%
\section{Application to regular sequences}\label{sec:app}
%%%%%%%%%%%%%%%%%%%%%%%%%%%%%%%%%%%%%%%%%%%%%%%%

Recall from the introduction that for a finite alphabet $\gS_m:=\{1,\ldots,m\}$, we say that a function $f:\gS_m^*\to R$ is $(R,m)$-regular if the $R$-module spanned by the maps 
    $f^u(w):=f(uw)$ is finitely generated. Also recall (cf. \cite[Theorem 2.2]{AS1992}) that $f$ is $(R,m)$-regular if 
    and only if there exist positive integers $m$ and $d$, matrices 
    $\Ab_1,\ldots,\Ab_m\in R^{d\times d}$, and vectors ${\bf v},{\bf w}\in R^d$ 
    such that 
    $$f(w)={\bf w}^T \Ab_w {\bf v},$$ 
    where $\Ab_w:=\Ab_{i_1}\cdots\Ab_{i_s}$, when $w={i_1}\cdots {i_s}.$ In fact, if $R$ is a PID then after fixing an $R$-module basis for the module spanned by the maps of the form $f^u$ there is a canonical integer $d$ and choice of vectors ${\bf v},{\bf w}\in R^d$ and matrices 
    $\Ab_1,\ldots,\Ab_m\in R^{d\times d}$. We give this construction in the following lemma. 
    
As before, for a finite set of matrices $\A:=\{\Ab_1,\Ab_2,\ldots,\Ab_m\}$, we write 
$\SG$ for the semigroup generated by these matrices under matrix multiplication. 
Here again, our semigroups will include an identity element, arising from the empty product
    of elements in $\A$.  We note that we include fields as PIDs.

\begin{lemma}\label{canonical}  Let $R$ be a subring of $\B{C}$. Suppose that $R$ is a principal ideal domain, that $f$ is $(R,m)$-regular, and that the $R$-module spanned by the maps $f^u$ has $R$-module basis $\{g_1,\ldots,g_d\}$. Then there is a canonical choice (with respect to our chosen basis) of vectors ${\bf v},{\bf w}\in R^d$ and matrices $\Ab_1,\ldots,\Ab_m\in R^{d\times d}$ such that $\spn_{\mathbb{C}}{\bf w}^T\SG=\mathbb{C}^{1\times d}$. In particular, one can take ${\bf w}^T=[g_1(\eps),\ldots,g_d(\eps)],$ where $\eps$ is the empty word.
\end{lemma}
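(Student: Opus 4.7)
\textit{Proof proposal.} The plan is to construct $({\bf w},{\bf v},\{\Ab_i\})$ directly from how the shift operators act on $M:=\spn_R\{f^u:u\in\gS_m^*\}$ in the basis $\{g_1,\ldots,g_d\}$, and then verify the span identity by reducing it to the $\B{C}$-linear independence of $g_1,\ldots,g_d$ as functions on $\gS_m^*$.

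Since $(f^u)^i=f^{ui}$, the left-shift $T_i\colon h\mapsto h^i$ preserves $M$; hence each $g_j^i$ has a unique basis expansion, whose coefficients canonically define matrices $\Ab_1,\ldots,\Ab_m\in R^{d\times d}$. Writing $f=f^\eps=\sum_j v_j g_j$ yields the canonical vector ${\bf v}\in R^d$, and we set ${\bf w}^T:=[g_1(\eps),\ldots,g_d(\eps)]$. An induction on $|w|$, using the defining relation for $\Ab_i$ and evaluating at $\eps$, simultaneously produces $f(w)={\bf w}^T\Ab_w{\bf v}$ and the identity ${\bf w}^T\Ab_w=[g_1(w),\ldots,g_d(w)]$ for $\Ab_w=\Ab_{i_1}\cdots\Ab_{i_s}$ when $w=i_1\cdots i_s$. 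The latter identity shows that $\spn_\B{C}{\bf w}^T\SG$ is the $\B{C}$-span of the evaluation row vectors $\{[g_1(w),\ldots,g_d(w)] : w\in\gS_m^*\}$, so the required equality with $\B{C}^{1\times d}$ is equivalent to $g_1,\ldots,g_d$ being $\B{C}$-linearly independent as $\B{C}$-valued functions on $\gS_m^*$.

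The one nontrivial step is this upgrade from the $R$-linear independence furnished by the basis hypothesis to $\B{C}$-linear independence. I would bridge through the fraction field $K$ of $R$ (with $R\subseteq K\subseteq\B{C}$ since $R$ is a PID in $\B{C}$): any $K$-linear dependence among the $g_j$'s clears denominators to an $R$-linear one, so the $g_j$'s are $K$-linearly independent; and because each $g_j$ is $K$-valued, the rank of any finite evaluation matrix $[g_j(w_\ell)]_{j,\ell}$ is the same over $K$ as over $\B{C}$, promoting $K$-linear independence to $\B{C}$-linear independence. This closes the argument; the main obstacle, as indicated, is precisely this rank-invariance/field-extension step, while every other ingredient is a routine bookkeeping of how shifts act on $M$ in the chosen basis.
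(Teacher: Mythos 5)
Your proof is correct and takes essentially the same route as the paper: read off $\Ab_i$, ${\bf v}$, and ${\bf w}^T=[g_1(\eps),\ldots,g_d(\eps)]$ directly from the shift action in the basis, observe ${\bf w}^T\Ab_w=[g_1(w),\ldots,g_d(w)]$, and reduce the span claim to $\B{C}$-linear independence of the $g_j$. The one place you are actually more careful than the paper is the last step: the paper asserts outright that a proper $\B{C}$-span would yield a relation with coefficients in $R$, whereas you justify the descent from $\B{C}$ to the fraction field $K$ via rank invariance of matrices with entries in $K$ and then clear denominators to reach $R$ --- this is the correct (implicit) content of the paper's assertion.
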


\begin{proof} Since $R$ is a PID and the $R$-module spanned by maps of the form $f^u$ is finitely generated and torsion free, we see that it has an $R$-module basis.  Let $\{g_1(w),\ldots,g_d(w)\}$ be an $R$-module basis for the $R$-module spanned by the maps $f^u(w)$. Then for $i\in\Sigma_m$ the functions $g_1(iw),\ldots,g_d(iw)$ can be expressed as $R$-linear combinations of $g_1(w),\ldots,g_d(w)$ and hence there are $d\times d$ matrices ${\bf A}_{1},\ldots,{\bf A}_{m}$ with entries in $R$ such that $$[g_1(w),\ldots,g_d(w)]{\bf A}_{i}=[g_1(iw),\ldots,g_d(iw)]$$ for $i=1,\ldots,m$ and all $w\in\Sigma_m^*.$ In particular, if we write $\eps$ for the empty word, and let $w\in\Sigma^*$ be any nonempty word where $w={i_s}\cdots {i_1}$ with ${i_1},\ldots, {i_s}\in\Sigma_m$, then $$[g_1(\eps),\ldots,g_d(\eps)]{\bf A}_{{i_s}}\cdots {\bf A}_{{i_1}}=[g_1(w),\ldots,g_d(w)].$$ We claim that the $\mathbb{C}$-span of the vectors $[g_1(w),\ldots ,g_d(w)]$, as $w$ ranges over all words in $\Sigma_m^*$, must span all of $\mathbb{C}^{1\times d}$.  Indeed, if this were not the case, then their span would be a proper subspace of $\mathbb{C}^{1\times d}$ and hence the span would have a non-trivial orthogonal complement.  In particular, there would exist $c_1,\ldots ,c_d\in R$, not all zero, such that $$c_1g_1(w)+\cdots +c_dg_d(w)=0$$ for every $w$, contradicting the fact that $g_1(w),\ldots ,g_d(w)$ are $R$-linearly independent sequences. 

Choosing ${\bf v}=[a_1,\ldots,a_d]^T\in R^d$ to be the unique vector such that $a_1g_1+\cdots+a_dg_d=f,$ finishes the proof of the lemma.
\end{proof}

We call the construction for $f$ in Lemma~\ref{canonical}, the {\em canonical representation of $f$}. Note that even though we call this representation `canonical', it is only unique up to conjugation by elements of $\GL_n(R)$ and that is why we first fix a basis.

We define the set $\C{S}_0(\gS_m)$ to be the set of all maps $f:\Sigma_m^*\to \B{C}$ such that there is a positive integer $d$, matrices $\A = \{\Ab_1,\ldots,\Ab_m\}$ with 
    $\Ab_i \in\B{C}^{d\times d}$, and vectors ${\bf v},{\bf w}\in\B{C}^d$ such that $f(w)={\bf w}^T \Ab_w {\bf v},$ where $\Ab_w=\Ab_{i_1}\cdots\Ab_{i_s}$, when $w={i_1}\cdots {i_s}$ and $\SG$ a tame semigroup. That is, the set $\C{S}_0(\gS_m)$ is the set of all $(\B{C},m)$-regular functions $f$ whose associated set of matrices $\C{A}$ generates a tame semigroup $\SG$.

Recall that for a word $w\in\gS_m^*$, we denote by $|w|$ the length of the word $w$; we also use $|x|$ for the standard absolute value of the real number $x$. The contexts of these usages are quite evident and not easily confused in what follows. 

The set of $(\mathbb{C},m)$-regular sequences exhibits algebraic structure. 
Given $f,g:\gS_m^*\to\B{C}$, both $(\B{C},m)$-regular, we define the 
    {\em convolution product} $f\star g:\gS_m^*\to\B{C}$ by 
    $$(f\star g)({i_1}\cdots {i_s}):=\sum_{j=0}^s 
       f({i_1}\cdots {i_j})g({i_{j+1}}\cdots {i_s}).$$ 
It is well-known (cf. \cite[Theorem 3.1 and Corollary 3.2]{AS1992}) that the set of $(\mathbb{C},m)$-regular 
    sequences forms a ring under pointwise addition and convolution product, 
    which we denote by $\C{R}(\gS_m).$ 
We denote by $\C{R}_0(\gS_m)$ the $\B{C}$-subalgebra of $\C{R}(\gS_m)$ generated 
    by the $(\B{C},m)$-automatic sequences under the convolution product.  (An automatic sequence is a regular sequence taking only finitely many distinct values.)
    
We say a $(\mathbb{C},m)$-regular function $f:\gS_m^*\to\B{C}$ is {\em polynomially bounded}, provided there is a $k\geqslant 0$ and $c>0$ such that $|f(w)|\leqslant c\cdot|w|^k.$

To aid in the proof of Theorem \ref{main}, we prove the following equivalence.

\begin{theorem}\label{iff} Let $f:\gS_m^*\to\B{Z}$ be $(\B{Z},m)$-regular. Then the following are equivalent:
\begin{enumerate}
\item[(i)] $f\in\C{R}_0(\gS_m),$
\item[(ii)] $f\in\C{S}_0(\gS_m),$
\item[(iii)] $f$ is polynomially bounded.
\end{enumerate}
\end{theorem}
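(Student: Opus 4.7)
My plan is to establish the cycle $(i) \Rightarrow (iii) \Rightarrow (ii) \Rightarrow (i)$. Combined with $(ii) \Rightarrow (iii)$, which is immediate from Theorem \ref{morder}, this will yield the three-way equivalence, the delicate direction being $(ii) \Rightarrow (i)$. For $(i) \Rightarrow (iii)$, I would observe that automatic sequences take finitely many values and are therefore polynomially bounded, and that convolution preserves polynomial boundedness: if $|f(w)|\leqslant c_1|w|^{k_1}$ and $|g(w)|\leqslant c_2|w|^{k_2}$, then a term-by-term estimate of the convolution sum gives $|(f\star g)(w)|\leqslant c_1c_2(|w|+1)|w|^{k_1+k_2}$. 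Since $\C{R}_0(\gS_m)$ is the $\B{C}$-algebra generated by automatic sequences under convolution, every element of $\C{R}_0(\gS_m)$ is polynomially bounded.

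For $(iii) \Rightarrow (ii)$, I would begin with the canonical integer representation of $f$ given by Lemma \ref{canonical}, and reduce it if necessary by passing to $\SG$-invariant sub-$\B{Z}$-modules on the column side, arriving at an integer representation in which both the row span $\{\mathbf{w}^T\Ab_u\}$ and the column span $\{\Ab_v\mathbf{v}\}$ exhaust the full ambient space. Choosing words $u_1,\ldots,u_d$ and $v_1,\ldots,v_d$ whose associated row and column vectors form bases, let $U$ and $V$ be the corresponding invertible matrices. Then the $(i,j)$-entry of $U\Ab_w V$ equals $f(u_iwv_j)$, which is bounded by a polynomial in $|w|$ by hypothesis, forcing $\|\Ab_w\|=\|U^{-1}(U\Ab_w V)V^{-1}\|$ to be polynomially bounded in $|w|$. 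In particular, for any fixed $w$ we have $\|\Ab_w^n\|\leqslant m_{n|w|}(\A)=O(n^K)$, whence $\rho(\Ab_w)\leqslant 1$. Since $\Ab_w$ has integer entries, its eigenvalues, together with their Galois conjugates (which are also eigenvalues of $\Ab_w$, as the minimal polynomial divides the characteristic polynomial), are algebraic integers of absolute value at most one, so by Kronecker's theorem each is zero or a root of unity. Therefore $\SG$ is tame and $f\in\C{S}_0(\gS_m)$.

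For $(ii) \Rightarrow (i)$, I would induct on the dimension $d$ of a tame representation, proving the stronger statement that every $(\B{C},m)$-regular $f$ admitting a tame representation lies in $\C{R}_0(\gS_m)$. When $\SG$ is finite (which in particular covers $d=1$), the sequence $f$ takes only finitely many values and is therefore automatic, hence in $\C{R}_0(\gS_m)$. For the inductive step with $\SG$ infinite, Lemma \ref{lem: GL} provides $\Ub\in\GL_d(\B{C})$ conjugating each $\Ab_i$ into block upper triangular form with diagonal blocks ${\bf B}_i$ and ${\bf C}_i$ of dimensions $e$ and $d-e$ and off-diagonal block ${\bf D}_i$. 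Then $\Ub^{-1}\Ab_w\Ub$ has diagonal blocks ${\bf B}_w$ and ${\bf C}_w$ and off-diagonal block ${\bf E}_w=\sum_{w=uiv}{\bf B}_u{\bf D}_i{\bf C}_v$, the sum running over all decompositions of $w$ into a prefix, a single letter, and a suffix. Writing $f$ via this block decomposition, $f$ is a $\B{C}$-linear combination of a contribution from the ${\bf B}$-block, a contribution from the ${\bf C}$-block, and an off-diagonal contribution that, via the convolution identity for ${\bf E}_w$, is a finite sum of convolutions of scalar-valued functions associated to the smaller ${\bf B}$- and ${\bf C}$-semigroups. These smaller semigroups are tame (as homomorphic images of $\SG$), so by the inductive hypothesis each constituent lies in $\C{R}_0(\gS_m)$; closure under addition and convolution then yields $f\in\C{R}_0(\gS_m)$.

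The main obstacle will be the bookkeeping in $(ii) \Rightarrow (i)$: one must verify that every auxiliary scalar function arising from the block decomposition is itself a $(\B{C},m)$-regular function possessing a tame representation of dimension strictly less than $d$, so that the inductive hypothesis applies. A secondary technical point in $(iii) \Rightarrow (ii)$ is justifying the iterative reduction to a two-sidedly minimal \emph{integer} representation, which relies on the freeness of sub-$\B{Z}$-modules of $\B{Z}^d$ and on the observation that any $\SG$-invariant sub-$\B{Z}$-module containing $\mathbf{v}$ yields a valid smaller integer representation of $f$.
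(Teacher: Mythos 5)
Your proposal is correct, and the two hardest implications largely coincide with the paper's, but the overall logical architecture is genuinely different. The paper first proves the standalone fact $\C{R}_0(\gS_m)=\C{S}_0(\gS_m)$ (Proposition \ref{Thm1}), which requires showing \emph{both} inclusions: the $\supseteq$ direction is your dimension-induction block-decomposition argument, and the $\subseteq$ direction requires proving that $\C{S}_0(\gS_m)$ is closed under addition and convolution (done by an explicit block-matrix construction for the convolution of two tame representations). The paper then closes the equivalence via (ii) $\Rightarrow$ (iii) (a tame semigroup has joint spectral radius at most one, so $m_n(\A)$ is polynomially bounded by \cite{B2005}) and (iii) $\Rightarrow$ (ii) \emph{by contrapositive}: if some $\Ab_w$ in the canonical representation has a non-root-of-unity nonzero eigenvalue, Kronecker's theorem yields an eigenvalue $\lambda$ with $|\lambda|>1$, and row-minimality alone together with the fact that each basis function $g_j$ is a $\B{Z}$-combination of shifts $f^u$ produces words $u_h x_i w^n$ along which $|f|$ grows like $|\lambda|^n$.

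You instead run the cycle (i) $\Rightarrow$ (iii) $\Rightarrow$ (ii) $\Rightarrow$ (i), which is logically more economical for the theorem as stated: proving (i) $\Rightarrow$ (iii) directly (automatic sequences are bounded; convolution multiplies polynomial growth rates) lets you avoid proving that $\C{S}_0(\gS_m)$ is closed under convolution, since closure of $\C{R}_0(\gS_m)$ is definitional. And your (iii) $\Rightarrow$ (ii) is the contrapositive-free version of the paper's argument: you pass to a two-sidedly minimal integer representation so that $f$-values along words $u_iwv_j$ control $\|\Ab_w\|$, deduce $\rho(\Ab_w)\leqslant 1$ for each fixed $w$, and apply Kronecker via integrality of the characteristic polynomial. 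This is arguably cleaner conceptually, at the cost of justifying the two-sided reduction over $\B{Z}$ (which you flag; note that reducing to the column span $W=\sum_{\Yb\in\SG}\B{Z}\Yb{\bf v}$ preserves row-minimality automatically, because the restriction map on dual spaces is surjective, so no iteration is actually needed). Two small things worth being aware of: the paper's (ii) $\Rightarrow$ (iii) quotes Theorem~1.2 of \cite{B2005} rather than Theorem~\ref{morder}, since \ref{morder} carries a non-degeneracy hypothesis that would require a separate trivial case; and your cycle, because (iii) $\Rightarrow$ (ii) genuinely uses that $f$ is $\B{Z}$-valued, does not yield the paper's stronger statement that $\C{R}_0(\gS_m)=\C{S}_0(\gS_m)$ over $\B{C}$ --- if you want that separate fact, you would still need to prove $\C{R}_0\subseteq\C{S}_0$ directly.
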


We prove Theorem \ref{iff} by first considering the equivalence of the statements (i) and (ii)---this does not require that $f$ be $\mathbb{Z}$-valued and we show in fact that $\C{R}_0(\gS_m)=\C{S}_0(\gS_m)$.  We then show the equivalence of (ii) and (iii), which does require that $f$ be $\mathbb{Z}$-valued.  For example, if $f: \gS_1^*\to \B{C}$ is $(\B{C},m)$-regular and defined by $f(1^n)=(1/2)^n$, then $f$ is polynomially bounded, but it is not in $\C{S}_0(\gS_1)$.  

We make use of the following fact.   

\begin{lemma}\label{gsauto}
The set $\C{S}_0(\gS_m)$ contains all $(\B{C},m)$-automatic functions. 
\end{lemma}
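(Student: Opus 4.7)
The plan is to use the standard automaton-to-matrix construction and then observe that the resulting matrix semigroup is not just tame but actually finite. Let $f:\Sigma_m^*\to\B{C}$ be $(\B{C},m)$-automatic, so by definition $f$ is $(\B{C},m)$-regular and takes only finitely many values. Because $f$ is automatic, there exists a deterministic finite automaton with output $(Q,\delta,q_0,\tau)$, where $Q=\{q_0,q_1,\ldots,q_{n-1}\}$ is a finite state set, $\delta:Q\times\Sigma_m\to Q$ is the transition function, and $\tau:Q\to\B{C}$ is the output function, such that $f(w)=\tau(\delta^*(q_0,w))$ for every $w\in\Sigma_m^*$, where $\delta^*$ is the obvious extension of $\delta$.

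Given this DFA, I would define, for each $i\in\Sigma_m$, the $n\times n$ matrix $\Ab_i$ by $(\Ab_i)_{jk}=1$ if $\delta(q_j,i)=q_k$ and $0$ otherwise, together with the row vector $\mathbf{w}^T=\mathbf{e}_{q_0}^T$ and the column vector $\mathbf{v}=(\tau(q_0),\ldots,\tau(q_{n-1}))^T$. A direct check, iterating the identity $\mathbf{e}_{q}^T\Ab_i=\mathbf{e}_{\delta(q,i)}^T$, then shows $\mathbf{w}^T\Ab_w\mathbf{v}=\tau(\delta^*(q_0,w))=f(w)$, so this furnishes a matrix representation of $f$ in the sense of \cite{AS1992}.

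The key observation is that each $\Ab_i$ encodes a function $Q\to Q$ (every row contains exactly one nonzero entry, equal to $1$), and products of such matrices encode compositions of such functions. Hence the semigroup $\langle\A\rangle$ embeds into the monoid of functions from $Q$ to $Q$, which has cardinality $n^n$. In particular, $\langle\A\rangle$ is finite. Finally, any finite matrix semigroup is tame: if $\Xb\in\langle\A\rangle$ there exist positive integers $p<q$ with $\Xb^p=\Xb^q$, so every eigenvalue $\lambda$ of $\Xb$ satisfies $\lambda^p=\lambda^q$, forcing $\lambda=0$ or $\lambda^{q-p}=1$, i.e. $\lambda\in\C{U}\cup\{0\}$. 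Thus $\langle\A\rangle$ is tame and $f\in\C{S}_0(\Sigma_m)$, completing the proof.

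There is no real obstacle here; the only subtlety is making sure the construction of the matrices is consistent with the left-to-right convention $f^u(w)=f(uw)$ used in the paper, which is why I record the transition identity $\mathbf{e}_{q}^T\Ab_i=\mathbf{e}_{\delta(q,i)}^T$ explicitly. The heart of the argument is the elementary but decisive fact that the automaton's transition matrices generate a finite, hence tame, semigroup.
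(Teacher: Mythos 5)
Your proof is correct, and it takes a genuinely different and in fact shorter route than the paper's. The paper works with the \emph{canonical} representation of $f$ furnished by Lemma~\ref{canonical}: it fixes a $\B{C}$-basis $g_1,\ldots,g_d$ of the span of the maps $f^u$, notes that each $g_j$ takes only finitely many values because $f$ is automatic, and then runs an eigenvalue-growth argument (evaluating $\gl^n\cdot{\bf y}^T{\bf u}$ against a bounded sum) to deduce that every eigenvalue of every matrix in the canonical semigroup lies in $\C{U}\cup\{0\}$. You instead build an \emph{explicit, non-canonical} representation directly from the defining DFAO, observe that the transition matrices are $0$--$1$ matrices with one $1$ per row (hence encode functions $Q\to Q$), so the generated semigroup is finite of size at most $n^n$, and then note that any finite matrix semigroup is automatically tame by the pigeonhole argument $\Xb^p=\Xb^q$. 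Since $\C{S}_0(\gS_m)$ is defined by the existence of \emph{some} tame representation, producing an explicit finite one is entirely sufficient, and your route bypasses the eigenvalue estimate altogether. The one point worth making explicit, since the paper defines ``automatic'' as ``regular with finitely many values'' rather than directly via DFAOs, is the standard equivalence: a $(\B{C},m)$-regular $f$ with finitely many values has a finite $m$-kernel (the coefficient vector expressing $f^u$ in a fixed basis $g_1,\ldots,g_d$ is determined by the finitely-valued tuple $(f(uw_1),\ldots,f(uw_d))$ for suitable $w_j$), and a finite $m$-kernel yields a DFAO. With that bridge noted, your argument is complete. What you gain is elementarity and the stronger conclusion that \emph{some} representing semigroup is finite; what the paper's proof gains is the information that the \emph{canonical} representation itself is tame, which is the version reused later in the proof of Theorem~\ref{iff}.
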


\begin{proof}
Let $f:\gS_m^*\to \mathbb{C}$ be a $(\mathbb{C},m)$-automatic function and let $\{g_1\ldots ,g_d\}$ be an $R$-module basis for the $R$-module spanned by maps of the form $f^u$, and let $\SG$ be the semigroup of the set of matrices determined by the canonical representation of $f$ and ${\bf w}^T$ be also as given by the canonical representation. By Lemma \ref{canonical}, $\spn_{\B{C}}{\bf w}^T\langle \C{A}\rangle=\B{C}^{1\times d}$.
     
It suffices to show that $\SG$ is a tame semigroup.
To this end, suppose that $\Ab \in \SG$ and that $\gl$ is an eigenvalue of $\Ab$. Then there is a nonzero vector ${\bf u}$ such that $\Ab{\bf u}=\lambda {\bf u}$.  Let ${\bf y}$ be a vector such that ${\bf y}^T{\bf u}\neq 0$. Note that one can choose ${\bf y}$ to be the vector whose $i$th entry is the complex conjugate of the $i$th entry of ${\bf u}$.

Since $\spn_{\B{C}}{\bf w}^T\langle \C{A}\rangle=\B{C}^{1\times d}$, there exist $c_1,\ldots,c_\ell\in \B{C}$ and ${\bf X}_i\in \SG$ such that ${\bf y}^T=\sum_{i=1}^\ell c_i {\bf w}^T  {\bf X}_i$. Thus $${\bf y}^T \Ab^n = \sum_{i=1}^\ell c_i {\bf w}^T {\bf X}_i \Ab^n.$$ For $i=1,\ldots,\ell$ let $x_i\in\Sigma_m^*$ be the word corresponding to ${\bf X}_i$ and let $a\in\Sigma_m^*$ be the word corresponding to $\Ab$; that is $x_i=i_s\cdots i_0$, where ${\bf X}_i=\Ab_{x_i}=\Ab_{i_s}\cdots \Ab_{i_0}$, and similarly for $a$. Then $${\bf w}^T{\bf X}_i\Ab^n={\bf w}^T\Ab_{x_ia^n}=[g_1(x_ia^n),\ldots,g_d(x_ia^n)].$$ 
Write ${\bf u}=[b_1,\ldots,b_d]^T.$ Then 
\begin{equation}\label{lambdabig}\gl^n\cdot{\bf y}^T{\bf u}={\bf y}^T\gl^n{\bf u}={\bf y}^T \Ab^n {\bf u}=\sum_{i=1}^\ell\sum_{j=1}^dc_ib_jg_j(x_ia^n).\end{equation}

By assumption, each of $\{g_1(n)\}_{n\geqslant 0},\ldots ,\{g_d(n)\}_{n\geqslant 0}$ is in the $\mathbb{C}$-module generated by the maps $f^u$, and since $f$ is automatic, each of the maps $f^u$ can take only a finite number of values, so that also each of the functions $g_i$ can take only a finite number of values. Thus the sum on the righthand side of \eqref{lambdabig} can take only a finite number of values as $n\to\infty$, and hence also the lefthand side. This is only possible if $\gl$ is an element of $\C{U}\cup\{0\}.$  Thus $\SG$ must be tame.
\end{proof}

\begin{proposition}\label{Thm1} We have $\C{R}_0(\gS_m)=\C{S}_0(\gS_m)$.
\end{proposition}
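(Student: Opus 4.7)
The plan is to prove both inclusions of $\C{R}_0(\gS_m) = \C{S}_0(\gS_m)$ separately, with the second being the main challenge. For the routine direction $\C{R}_0(\gS_m) \subseteq \C{S}_0(\gS_m)$, since $\C{R}_0(\gS_m)$ is by definition the $\B{C}$-subalgebra generated under pointwise addition and convolution by the $(\B{C},m)$-automatic sequences, and Lemma \ref{gsauto} already places every automatic sequence in $\C{S}_0(\gS_m)$, it suffices to check closure of $\C{S}_0(\gS_m)$ under scalar multiplication, addition, and convolution. Scalar multiplication is immediate. For addition, if $f(w)={\bf w}_1^T {\bf A}_w{\bf v}_1$ and $g(w)={\bf w}_2^T{\bf B}_w{\bf v}_2$ come from tame semigroups, the block-diagonal representation ${\bf A}_i\oplus {\bf B}_i$ represents $f+g$, and the diagonal-block structure of products preserves the property that all eigenvalues lie in $\C{U}\cup\{0\}$. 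For convolution, the standard block upper triangular construction ${\bf C}_i=\left[\begin{matrix}{\bf A}_i & {\bf A}_i{\bf v}_1{\bf w}_2^T\\ {\bf 0} & {\bf B}_i\end{matrix}\right]$ with appropriate bordering vectors represents $f\star g$; since products of block upper triangular matrices remain block upper triangular with diagonal blocks being the corresponding products of the originals, tameness is preserved.

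For the substantive inclusion $\C{S}_0(\gS_m) \subseteq \C{R}_0(\gS_m)$, I would proceed by induction on the dimension $d$ of the representing matrices. If $\SG$ is finite---which covers the base case $d=1$, where tameness forces each scalar generator into $\C{U}\cup\{0\}$ and hence $\SG$ is automatically finite---then $f$ takes only finitely many values and is therefore $(\B{C},m)$-automatic, so it lies in $\C{R}_0(\gS_m)$. Otherwise $\SG$ is infinite and Lemma \ref{lem: GL} lets me conjugate over $\B{C}$ so that each generator takes the block form ${\bf A}_i=\left[\begin{matrix}{\bf B}_i & {\bf D}_i\\ {\bf 0} & {\bf C}_i\end{matrix}\right]$ with $1\leqslant e\leqslant d-1$. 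Both $\langle {\bf B}_1,\ldots,{\bf B}_m\rangle$ and $\langle {\bf C}_1,\ldots,{\bf C}_m\rangle$ remain tame, occurring as diagonal blocks of the original products. Writing ${\bf v}=[{\bf v}_1,{\bf v}_2]^T$ and ${\bf w}=[{\bf w}_1,{\bf w}_2]^T$ along the block split and multiplying out gives
\[f(w)={\bf w}_1^T{\bf B}_w{\bf v}_1+{\bf w}_2^T{\bf C}_w{\bf v}_2+\sum_{j=1}^{|w|}{\bf w}_1^T{\bf B}_{i_1\cdots i_{j-1}}{\bf D}_{i_j}{\bf C}_{i_{j+1}\cdots i_{|w|}}{\bf v}_2,\]
with empty matrix products read as the identity. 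The first two summands arise from strictly lower-dimensional tame representations and so lie in $\C{R}_0(\gS_m)$ by induction. For the cross term I would introduce, for each pair $(k,\ell)$, the automatic sequence $\phi_{k,\ell}$ defined by $\phi_{k,\ell}(i)=({\bf D}_i)_{k,\ell}$ on single-letter words and $\phi_{k,\ell}\equiv 0$ elsewhere, together with $b_k(w)={\bf w}_1^T{\bf B}_w{\bf e}_k$ and $c_\ell(w)={\bf e}_\ell^T{\bf C}_w{\bf v}_2$; both $b_k$ and $c_\ell$ are in $\C{R}_0(\gS_m)$ by induction. Unwinding the definition of the convolution shows that $\sum_{k,\ell}(b_k\star\phi_{k,\ell}\star c_\ell)$ reproduces the $j$-sum exactly.

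The main obstacle is reverse-engineering the convolution expression from the block triangular matrix decomposition. The crucial observation is that the sum over $j$ appearing in the upper-right block of a product of block upper triangular matrices records the position at which a ``path'' crosses from the top block to the bottom block through an off-diagonal entry ${\bf D}_{i_j}$, and this one-letter insertion is exactly what a triple convolution with the point-mass middle factor $\phi_{k,\ell}$ captures. Once that identification is in place, the induction on dimension runs cleanly on the strength of Lemmas \ref{gsauto} and \ref{lem: GL}.
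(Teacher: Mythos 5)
Your proposal is correct and follows essentially the same strategy as the paper's: $\C{R}_0(\gS_m)\subseteq\C{S}_0(\gS_m)$ via Lemma \ref{gsauto} and closure of $\C{S}_0(\gS_m)$ under linear combinations and convolution (with the same block-triangular constructions), and $\C{S}_0(\gS_m)\subseteq\C{R}_0(\gS_m)$ by induction on dimension through Lemma \ref{lem: GL} with the cross term expressed as a sum of convolutions. The only cosmetic differences are that you package the cross term as a single triple convolution $\sum_{k,\ell}(b_k\star\phi_{k,\ell}\star c_\ell)$ where the paper uses nested two-step convolutions $\sum_i f_i\star\bigl(\sum_j \phi_j\star h_j\bigr)$, and you build one matrix family for $f\star g$ where the paper uses $m$ families ${\bf C}_{i,j}$ indexed by the letter at the splitting point.
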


\begin{proof} We first prove the inclusion $\C{R}_0(\gS_m)\supseteq\C{S}_0(\gS_m)$. Let $f\in\C{S}_0(\gS_m)$, so that $f(w)={\bf w}^T \Ab_w {\bf v}$, where ${\bf w},{\bf v}$ and $\C{A}$ are as provided by the definition of $\C{S}_0(\gS_m)$ and the semigroup $\SG$ is tame.  We let $d$ be the natural number for which the elements of our semigroup lie in $\B{C}^{d\times d}$, let $\C{S}$ denote the $\B{C}$-span of $\SG$, and set $V=\B{C}^d$.  We claim that $f\in \C{R}_0(\gS_m)$.  We prove this by induction on $d$.  

Notice that if $\C{A}$ is finite, then $f$ is automatic, so $f\in \C{R}_0(\gS_m)$ and we are done.  In particular, this occurs when $d=1$ and so we obtain the base case. Now suppose that the claim holds for all dimensions less than $d$.  We may assume that $\C{A}$ is infinite.   Then by Lemma \ref{lem: GL}, there is a matrix ${\bf U}$ and $e\in \{1,\ldots ,d-1\}$ such that
for $i=1,\ldots,m$ we have $${\bf U}^{-1}\Ab_i{\bf U}=\left[\begin{matrix} {\bf B}_i & {\bf D}_i\\ {\bf 0}_{(d-e)\times e} & {\bf C}_i\end{matrix}\right],$$ where ${\bf B}_i\in\B{C}^{e\times e},$ ${\bf D}_i\in\B{C}^{e\times d},$ ${\bf C}_i\in\B{C}^{(d-e)\times (d-e)},$ and ${\bf 0}_{(d-e)\times e}$ is the $(d-e)\times e$ zero matrix.

If $w={i_1}\cdots {i_s}$, then $${\bf U}^{-1}\Ab_w{\bf U}=\left[\begin{matrix} {\bf B}_w & \sum_{j=0}^{s-1}{\bf B}_{i_1}\cdots {\bf B}_{i_j}{\bf D}_{i_{j+1}}{\bf C}_{i_{j+2}}\cdots{\bf C}_{i_s}\\ {\bf 0}_{(d-e)\times e} & {\bf C}_w\end{matrix}\right],$$ where ${\bf B}_w={\bf B}_{i_1}\cdots {\bf B}_{i_s}$ and ${\bf C}_w={\bf C}_{i_1}\cdots {\bf C}_{i_s}$.

Notice that all the eigenvalues of the matrices in $\langle {\bf B}_{1},\ldots,{\bf B}_m\rangle$ and $\langle {\bf C}_{1},\ldots,{\bf C}_m\rangle$ are in $\C{U}\cup\{0\}$. Let ${\bf y}^T={\bf w}^T{\bf U}$, ${\bf x}={\bf U}^{-1}{\bf v}$, and write $${\bf x}=\left[\begin{matrix} {\bf x}_1\\ {\bf x}_2\end{matrix}\right]\quad\mbox{and}\quad {\bf y}=\left[\begin{matrix} {\bf y}_1\\ {\bf y}_2\end{matrix}\right],$$ where ${\bf x}_1,{\bf y}_1$ are $e\times 1$ vectors and ${\bf x}_2,{\bf y}_2$ are $(d-e)\times 1$ vectors. Then \begin{align*} f(w)&= {\bf w}^T\Ab_w{\bf v}\qquad (w={i_1}\cdots {i_s})\\
&=\left[{\bf y}_1^T\ {\bf y}_2^T\right]\left[\begin{matrix}{\bf B}_w & \sum_{j=0}^{s-1}{\bf B}_{i_1}\cdots {\bf B}_{i_j}{\bf D}_{i_{j+1}}{\bf C}_{i_{j+2}}\cdots{\bf C}_{i_s}\\ {\bf 0}_{(d-e)\times e} & {\bf C}_w  \end{matrix}\right]\left[\begin{matrix} {\bf x}_1\\ {\bf x}_2\end{matrix}\right]\\
&={\bf y}_1^T{\bf B}_w{\bf x}_1+{\bf y}_2^T{\bf C}_w{\bf x}_2+{\bf y}_1^T\left(\sum_{j=0}^{s-1}{\bf B}_{i_1}\cdots {\bf B}_{i_j}{\bf D}_{i_{j+1}}{\bf C}_{i_{j+2}}\cdots{\bf C}_{i_s}\right){\bf x}_2.
\end{align*} By the induction hypothesis, we have that both $g(w):={\bf y}_1^T{\bf B}_w{\bf x}_1$ and $h(w):={\bf y}_2^T{\bf C}_w{\bf x}_2$ are in $\C{R}_0(\gS)$. Thus it suffices to check that $$k(w):={\bf y}_1^T\left(\sum_{j=0}^{s-1}{\bf B}_{i_1}\cdots {\bf B}_{i_j}{\bf D}_{i_{j+1}}{\bf C}_{i_{j+2}}\cdots{\bf C}_{i_s}\right){\bf x}_2$$ is in $\C{R}_0(\gS)$.

To this end, let $f_1,\ldots,f_e:\gS^*\to\B{C}$ be defined by $$f_i(w)={\bf y}_1^T{\bf B}_w {\bf e}_i,$$ where ${\bf e}_i$ is the $e\times 1$ column vector with a $1$ in the $i$th position and zeros in all other positions, and given a word $w={i_1}\cdots {i_s}$, we define the maps $g_1,\ldots,g_e:\gS^*\to\B{C}$ by $$g_i(w)=g_i({i_1}\cdots {i_s})={\bf e}_i^T{\bf D}_{i_1}{\bf C}_{i_2}\cdots{\bf C}_{i_s}{\bf x}_2,$$ where we use the convention that $g_i(\eps)=0$ for $\eps$ the empty word. Then $$k(w)=\sum_{i=1}^e \sum_{w_1w_2=w} f_i(w_1)g_i(w_2)=\sum_{i=1}^e(f_i\star g_i)(w).$$ By the inductive hypothesis $f_1,\ldots, f_e\in\C{R}_0(\gS)$. We now show that $g_i\in\C{R}_0(\gS)$ for each $i\in\{1,\ldots,e\}$. 

To see this, let $i\in\{1,\ldots,e\}$. Recall that by definition, $$g_i(w)=g_i({i_1}\cdots {i_s})={\bf e}_i^T{\bf D}_{i_1}{\bf C}_{i_2}\cdots{\bf C}_{i_s}{\bf x}_2.$$ For $j=1,\ldots,m$, set ${\bf z}_j(w)={\bf e}_i^T{\bf D}_j,$ and define the functions $h_j(w)={\bf z}_j^T {\bf C}_w {\bf x}_2,$ and $$\phi_j(w)=\begin{cases} 1 & \mbox{if $w=j$}\\ 0 & \mbox{otherwise.}\end{cases}$$ It is immediate that $\phi_j$ is $\gS$-automatic for each $j=1,\ldots,m$. Thus for each $j$ we have $\phi_j\in\C{R}_0(\gS_m)$. By the inductive hypothesis, each $h_j\in\C{R}_0(\gS_m)$, and so since $$\sum_{j=1}^m (\phi_j\star h_j)(w)=g_i(w),$$ we have that $g_i\in \C{R}_0(\gS_m)$. By definition $\C{R}_0(\gS_m)$ is closed under the convolution product and the taking of finite linear combinations, thus we have that $k(w)\in\C{R}_0(\gS_m)$. So $\C{R}_0(\gS_m)\supseteq \C{S}_0(\gS_m)$.

Since $\C{S}_0(\gS_m)$ contains all of the $(\B{C},m)$-automatic sequences, as given by Lemma~\ref{gsauto}, to show that $\C{R}_0(\gS_m)\subseteq \C{S}_0(\gS_m)$, it is sufficient to prove that $\C{S}_0(\gS_m)$ is closed under taking $\mathbb{C}$-linear combinations and under the convolution product. 

It is quite clear that if $\lambda\in \mathbb{C}$ and $f,g\in\C{S}_0(\gS_m)$ then $f+\lambda g\in\C{S}_0(\gS_m)$. For if \begin{equation}\label{fandg}f(w)={\bf w}_1^T \Ab_w {\bf v}_1,\qquad\mbox{and}\qquad g(w)={\bf w}_2^T {\bf B}_w {\bf v}_2,\end{equation} then $$(f+\lambda g)(w)=\left[{\bf w}_1^T\ {\bf w}_2^T\right]\left[\begin{matrix}\Ab_w & {\bf 0}\\ {\bf 0} & {\bf B}_w  \end{matrix}\right]\left[\begin{matrix} {\bf v}_1\\ \lambda {\bf v}_2\end{matrix}\right],$$ and the eigenvalues of $$\left[\begin{matrix}\Ab_w & {\bf 0}\\ {\bf 0} & {\bf B}_w  \end{matrix}\right]$$ are just the eigenvalues of $\Ab_w$ and ${\bf B}_w$, which are in $\C{U}\cup\{0\}.$ Hence $\C{S}_0(\gS_m)$ is closed under addition.

To see that $\C{S}_0(\gS_m)$ is a $\B{C}$-algebra under the convolution product, let $f,g\in\C{S}_0(\gS_m)$ be given as in \eqref{fandg}. Let $${\bf C}_{i,j}:=\left[\begin{matrix}\Ab_i & \gd_{i,j} {\bf v}_1{\bf w}_2^T {\bf B}_i\\ {\bf 0} & {\bf B}_i  \end{matrix}\right],$$ where $$\gd_{i,j}:= \begin{cases} 1 &\mbox{if $i=j$}\\ 0 &\mbox{if $i\neq j$}.\end{cases}$$ Then all eigenvalues from matrices in the semigroup generated by $\langle {\bf C}_{1,j},\ldots,{\bf C}_{m,j}\rangle$ lie in $\C{U}\cup\{0\}$. Thus if we write ${\bf C}_{w,j}={\bf C}_{i_1,j}\cdots {\bf C}_{i_s,j}$ when $w=i_1\cdots i_s$, we have \begin{multline*} h_j(w):=\left[{\bf w}_1^T\ {\bf 0}\right]{\bf C}_{w,j}\left[\begin{matrix} {\bf 0}\\ {\bf v}_2\end{matrix}\right]\\
= \sum_{\ell=0}^{s-1}\left({\bf w}_1^T \Ab_{i_1}\cdots \Ab_{i_\ell}{\bf v}_1{\bf w}_2^T {\bf B}_{i_{\ell+1}}\cdots {\bf B}_{i_s}{\bf v}_2\right)\gd_{j,i_{\ell+1}}\\
=\sum_{\ell=0}^{s-1}f(x_{i_1}\cdots x_{i_\ell})g(x_{i_{\ell+1}}\cdots x_{i_s})\gd_{j,i_{\ell+1}}\in\C{S}_0(\gS_m).
\end{multline*} Thus $(f\star g)(w)=h_1(w)+\cdots+h_m(w)+g(\eps)f(w)\in\C{S}_0(\gS_m).$
\end{proof}

\begin{proof}[Proof of Theorem \ref{iff}]
We have shown the equivalence of (i) and (ii) in Proposition~\ref{Thm1}.  We now show these are both equivalent to (iii).  

We first show that (ii) implies (iii).  Suppose that $f\in \C{S}_0(\gS_m)$.  Then there exist positive integers $m$ and $d$, matrices 
    $\Ab_1,\ldots,\Ab_m\in \B{C}^{d\times d}$, and vectors ${\bf v},{\bf w}\in \B{C}^d$ 
    such that 
    $$f(w)={\bf w}^T \Ab_w {\bf v},$$ 
    where $\Ab_w:=\Ab_{i_1}\cdots\Ab_{i_s}$, when $w={i_1}\cdots {i_s}.$  Moreover, by definition of $\C{S}_0(\gS_m)$, we may assume that the semigroup generated by  the $\Ab_i$ is tame.  
    Since we have a tame matrix semigroup, Theorem 1.2 of \cite{B2005} gives that there is a $c>0$ and a $k>0$ such that $|\Ab_w|\leqslant c\cdot|w|^k$ for all $w$ of length at least $1$.  It then follows
    $|f(w)| \leqslant c \|{\bf w}\| \cdot \|{\bf v}\| \cdot |w|^k$ for all non-trivial words $w$ and so we get that the growth of $f$ is polynomially bounded.   
 
It remains to show that (iii) implies (ii); we show this by proving the  contrapositive. We present an argument similar to the proof of Theorem 1.1 of \cite{BCH2014}. 

As in the proof of Lemma \ref{gsauto}, we can assume without loss of generality that $\spn_{\B{C}}{\bf w}^T\langle \C{A}\rangle=\B{C}^{1\times
     d}$ and $\spn_{\B{C}}\langle\C{A}\rangle{\bf v}=\B{C}^{d\times 1}.$ 
     
Suppose that $f:\Sigma_m^*\to\B{Z}$ is not in $\C{S}_0(\Sigma_m)$ and let ${\bf w},{\bf v},$ and $\C{A}$ be as given by the canonical representation of $f$, and recall (by Lemma \ref{canonical}) that $\spn_{\B{C}}{\bf w}^T\langle \C{A}\rangle=\B{C}^{1\times d}$. 
Then there is a word  $w\in\Sigma_m^*$ such that the matrix ${\bf A}_w\in\langle \C{A}\rangle $ has an eigenvalue that is nonzero and not a root of unity.  By Kronecker's theorem, ${\bf A}_w$ then has an eigenvalue $\lambda$ of modulus strictly larger than one.
Thus ${\bf A}_w$ has an eigenvector ${\bf y}$ such that ${\bf A}_w{\bf y}=\lambda{\bf y}.$ Now pick a nonzero vector ${\bf x}$ such that ${\bf x}^T{\bf y}=C\neq 0$; as in Lemma \ref{canonical} this vector can be taken to be the vector of complex conjugates of ${\bf y}$. Then $|{\bf x}^T{\bf A}_w^n{\bf y}|=|C|\cdot |\lambda|^n.$ 

Since $\spn_{\B{C}}{\bf w}^T\langle \C{A}\rangle=\B{C}^{1\times d}$ there exist an integer $\ell$,  words $x_1,\ldots,x_\ell\in\Sigma_m^*$, and complex numbers $\ga_1,\ldots,\ga_\ell,$ such that $${\bf x}^T=\sum_{i=1}^\ell \ga_i{\bf w}^T {\bf A}_{x_i}.$$ If we write ${\bf y}=[\beta_1,\ldots,\beta_d]^T$, then \begin{align} \nonumber |C|\cdot|\lambda|^n=|{\bf x}^T{\bf A}_w^n{\bf y}|&=\left|\sum_{i=1}^\ell\ga_i{\bf w}^T{\bf A}_{x_iw^n}{\bf y}\right|\\
\nonumber&=\left|\sum_{i=1}^\ell\sum_{j=1}^d\ga_i\gb_j g_j(x_iw^n)\right|\\
\label{lambdaleq}&\leqslant\sum_{i=1}^\ell\sum_{j=1}^d|\ga_i\gb_j|\cdot |g_j(x_iw^n)|.
\end{align} 

Now each of the basis functions $g_j$ is in the $\B{Z}$-module spanned by the maps $f^u$, so that, by \eqref{lambdaleq}, there exist positive integers $H$ and $Q$, integers $\{\gamma_{h,q}\}_{h\leqslant H,q\leqslant Q}$ (not all zero), and words $u_1,\ldots,u_H\in\Sigma^*$, such that \begin{equation}\label{lambdaleqf}|C|\cdot|\gl|^n\leqslant\sum_{i=1}^\ell\sum_{j=1}^d\sum_{h=1}^H\sum_{q=1}^Q|\ga_i\beta_j\gamma_{h,q}|\cdot|f(u_hx_iw^n)|.\end{equation}

Let $K=\sum_{i=1}^\ell\sum_{j=1}^d\sum_{h=1}^H\sum_{q=1}^Q|\ga_i\beta_j\gamma_{h,q}|$. Thus  \eqref{lambdaleqf} implies that some element from
$$\big\{ \{|f(u_h x_i w^n)|\}_{n\geqslant 0} \colon i=1,\ldots,\ell\ \mbox{and}\ h=1,\ldots,H\}\big\}$$ is at least $|C|\cdot|\lambda|^n/K$.  

We let $M$ denote the maximum of the lengths of $x_1,\ldots ,x_\ell,w,u_1,\ldots ,u_h$.
Then for each $i=1,\ldots,\ell$ and $h=1,\ldots,H$ we have $|u_h x_i w^n|< 2Mn$ for $n\geqslant 2$.   Hence we have constructed an infinite set of words $u_h x_i w^n$ such that 
$$|f(u_h x_i w^n)|\geqslant \frac{|C|}{K}\cdot|\lambda|^n>\frac{|C|}{K}\cdot|\lambda|^{\frac{|u_hx_iw^n|}{2M}}.$$ Since $|\lambda|>1$, we have that $|\lambda|^{1/2M}>1$, and thus $f$ is not polynomially bounded. This finishes the proof of the theorem.
\end{proof}

\begin{proof}[Proof of Theorem \ref{main}]
We prove the stronger result that whenever $f:\gS_m^*\to \mathbb{C}$ is in $\C{S}_0(\Sigma_m)$, then ${\rm GrDeg}(f)$ is a nonnegative integer.  Since $f\in \C{S}_0(\gS_m)$, there exist positive integers $m$ and $d$, matrices 
    $\Ab_1,\ldots,\Ab_m\in \B{C}^{d\times d}$, and vectors ${\bf v},{\bf w}\in \B{C}^d$ 
    such that 
    $$f(w)={\bf w}^T \Ab_w {\bf v},$$ 
    where $\Ab_w:=\Ab_{i_1}\cdots\Ab_{i_s}$ when $w={i_1}\cdots {i_s}.$  Moreover, by the definition of $\C{S}_0(\gS_m)$, we may assume that the semigroup $\SG$ is tame.  
    
   We may also assume that the complex span of ${\bf w}^T \Ab_w$ as $w$ ranges over $\gS_m^*$ is all of $\mathbb{C}^{1\times d}$ and that the complex span of $ \Ab_w{\bf v}$ as $w$ ranges over $\gS_m^*$ is all of $\mathbb{C}^{d\times 1}$; otherwise, we can work with a smaller representation that has this property and the resulting semigroup will still be tame.  Let $\| \cdot \|$ be the norm given by $\|{\bf A}\| = {\rm Tr}({\bf A}{\bf A}^*)$.  By Theorem \ref{morder}, there exists some nonnegative integer $k$ and positive constants $C_1$ and $C_2$ such that
   $C_1 |w|^k \leqslant \|\Ab_w\| \leqslant C_2 |w|^k$ for all non-trivial words $w$.  In particular, we have
   $|f(w)| \leqslant C_2\cdot \|{\bf w}\|\cdot \|{\bf v}\|\cdot |w|^k$ for all non-trivial words $w$ and so ${\rm GrDeg}(f)\leqslant k$.  On the other hand, the fact that $ \|\Ab_w\|\geqslant C_1 |w|^k$ gives that there is some $C_3$ and some $i,j$ such that $|{\bf e}_i^T \Ab_w {\bf e}_j| \geqslant C_3 |w|^k$ for infinitely many words $k$.  Since ${\bf e}_i^T$ is in the span of ${\bf w}^T \Ab_u$ for a fixed finite set of words $u$ and ${\bf e}_j$ is in the span of $\Ab_{u'} {\bf v}$ for a fixed finite set of words $u'$, we get that
   $${\bf e}_i^T \Ab_w {\bf e}_j$$ is a fixed linear combination of elements of the form $f(uwu')$ with $u,u'$ running over a finite set.  It follows that there exist fixed $u_0$ and $u_0'$ such that $|f(u_0wu_0')|\geqslant C_3 |w|^k$ for infinitely many $w$.  Since $u_0$ and $u_0'$ are fixed, there is some $C_4>0$ such that
  $ |f(u_0wu_0')|\geqslant C_4 |u_0u_0'w|^k$ for infinitely many $w$ and so   
$ {\rm GrDeg}(f)\geqslant k$. Thus the growth degree is precisely $k$.
\end{proof}

%%%%%%%%%%%%%%%%%%%%%%%%%%%%%%%%%%%%%%%%%%%%%%%%%
\bibliographystyle{amsplain}
\providecommand{\bysame}{\leavevmode\hbox to3em{\hrulefill}\thinspace}
\providecommand{\MR}{\relax\ifhmode\unskip\space\fi MR }
% \MRhref is called by the amsart/book/proc definition of \MR.
\providecommand{\MRhref}[2]{%
  \href{http://www.ams.org/mathscinet-getitem?mr=#1}{#2}
}
\providecommand{\href}[2]{#2}

%%%%%%%%%%%%%%%%%%%%%%%%%%%%%%%%%%%%%%%%%%%%%%%%%

\end{document}